\numberwithin{equation}{section}
\newcommand{\pr}[1]{#1^{\prime}}
\newcommand{\X}[1]{\mathcal{X}_{#1}}
\newcommand{\iid}{\mathrm{i.i.d}}
\newcommand{\Var}{\mathrm{Var}}
\newcommand{\Cov}{\mathrm{Cov}}
\newcommand{\Poi}{\mathrm{Poi}}
\newcommand{\R}{\mathbb{R}}
\newcommand{\Y}{\mathcal{Y}}
\newcommand{\C}{\check{C}}
\newcommand{\Pn}{\mathcal{P}_n}
\newcommand{\remove}[1]{}
\newcommand{\one}{{\bf 1}}
\def\P{\mathbb{P}}
\def\E{\mathbb{E}}
\newcommand{\bx}{{\bf x}}
\newcommand{\by}{{\bf y}}
\newcommand{\bi}{{\bf i}}
\newcommand{\bj}{{\bf j}}
\newcommand{\bbr}{\mathbb R}
\newcommand{\bbn}{{\mathbb N}}
\newcommand{\B}{\mathcal B}
\newcommand{\Xio}{\mathcal X_{i_1}}
\newcommand{\Xit}{\mathcal X_{i_2}}
\newcommand{\Xiot}{\mathcal X_{i_1} \cup \mathcal X_{i_2}}
\newcommand{\grnt}{g_{r_n(t)}}
\newcommand{\grntij}{g_{r_n(t)}^{(i,j)}}
\newcommand{\gone}{g_{r_n(t_1)}^{(i_1, j_1)}}
\newcommand{\gtwo}{g_{r_n(t_2)}^{(i_2, j_2)}}
\newcommand{\G}{\mathcal G}
\newcommand{\V}{\mathcal V}
\newcommand{\Hk}{\mathcal H_k}
\newcommand{\etakrd}{\eta_{k, \mathbb R^d}}
\newcommand{\nukrd}{\nu_{k, \mathbb R^d}}
\newcommand{\Eonij}{E_{1,n}^{(i,\mathbf j)}}
\newcommand{\Etnij}{E_{2,n}^{(\mathbf i,\mathbf j)}}
\theoremstyle{plain}
\newtheorem{theorem}{Theorem}[section]
\newtheorem{proposition}[theorem]{Proposition}
\newtheorem{corollary}[theorem]{Corollary}
\theoremstyle{definition}
\newtheorem{definition}[theorem]{Definition}
\newtheorem{remark}[theorem]{Remark}
\begin{document}

\title[Limit theorems for Betti numbers]{Limit theorems for Process-level Betti numbers for Sparse, Critical, and Poisson regimes}
\author{Takashi Owada and Andrew Thomas}
\address{Department of Statistics\\
Purdue University \\
IN, 47907, USA}
\email{owada@purdue.edu \\
thoma186@purdue.edu}
\thanks{This research is partially supported by the NSF : Probability and Topology \#1811428}

\subjclass[2010]{Primary 60D05. Secondary 55U10, 60F05, 05E45.}
\keywords{Random topology, Betti number, Central limit theorem, Poisson limit theorem.}
\begin{abstract}
The objective of this study is to examine the asymptotic behavior of Betti numbers of \v{C}ech complexes treated as stochastic processes and formed from random points in the $d$-dimensional Euclidean space $\R^d$. We consider the case where the points of the \v{C}ech complex are generated by a Poisson process with intensity $nf$ for a probability density $f$. We look at the cases where the behavior of the connectivity radius of \v{C}ech complex causes simplices of dimension greater than $k+1$ to vanish in probability, the so-called sparse and Poisson regimes, as well when the connectivity radius is on the order of $n^{-1/d}$, the critical regime. We establish limit theorems in all of the aforementioned regimes, a central limit theorem for the sparse and critical regimes, and a Poisson limit theorem for the Poisson regime. When the connectivity radius of the \v{C}ech complex is $o(n^{-1/d})$, i.e., the sparse and Poisson regimes, we can decompose the limiting processes into a time-changed Brownian motion and a time-changed homogeneous Poisson process respectively. In the critical regime, the limiting process is a centered Gaussian process but has much more complicated representation, because the \v{C}ech complex becomes highly connected with many topological holes of any dimension. 
\end{abstract}

\maketitle

\section{Introduction}
It's easy enough to tell the difference between a donut and a mug on a sunny day---it's much harder in a hurricane. In a state in which there are innumerable ways in which one may classify objects, it may be useful to see that compared to a baseball---the mug with its handle and the donut with its hole---both have something for you to put your hand through. The point being is that in a veritable storm surge of noise, the ability to categorize objects by their most essential structure is an important start in learning the sum of their properties. The problem of analyzing data in the presence of noise has always been a nuisance. With the advent of the application of algebraic topology to probabilitistic structures, the ability to capture the most prominent of features of a space has never been closer at hand. These techniques and their corresponding theory typically fall under the umbrella of topological data analysis (TDA). 

A brief introduction into concepts of algebraic topology is needed before moving onward. Though our introduction here will be theoretically impoverished, it will nonetheless provide an intuition for some of the concepts discussed in this study. Those wishing for an introduction to algebraic topology for statistical ends should see \cite{carlsson2014topological, wasserman2016}. Treatments from a topological perspective for practitioners of all sorts can be seen in \cite{ghrist2014}, and a rigorous treatment can be seen in \cite{hatcher}. In many of the studies on TDA, especially those specific to random topology, the \emph{Betti number} has been a main focus as a good quantifier of topological complexity beyond simple connectivity. Given a topological space $X$ and an integer $k \geq 0$, the $k$th homology group $H_k(X)$ is the quotient group $\text{ker} \, \partial_k / \text{im}\, \partial_{k+1}$, where $\partial_k, \partial_{k+1}$ are boundary maps for $X$. More intuitively, $H_k(X)$ represents a class of topological invariants representing $k$-dimensional ``cycles" or ``holes" as the boundary of a $(k+1)$-dimensional body. The $k$th Betti number of $X$, denoted by $\beta_k(X)$, is defined as the rank of $H_k(X)$. Thus $\beta_k(X)$ captures, in essence, the number of $k$-dimensional cycles in $X$ (in the following we write ``$k$-cycle" for short). 
Having dispatched with this formalism, it is useful to know that $\beta_0(X)$ represents the number of connected components of $X$, $\beta_1(X)$ the number of ``closed loops'' in $X$ and $\beta_2(X)$ the number of ``voids''. For a manifold embedded in $\R^d$ these are features in one, two and three-dimensional subspaces respectively. Though it is the case that $\beta_k(X)$ is defined for all integers $k \geq 0$, in Figure~\ref{f:examples} above $\beta_k(X) \geq 0$ for $k \geq 3$. 

\begin{figure}[!t]
\begin{center}
\includegraphics[width=14cm]{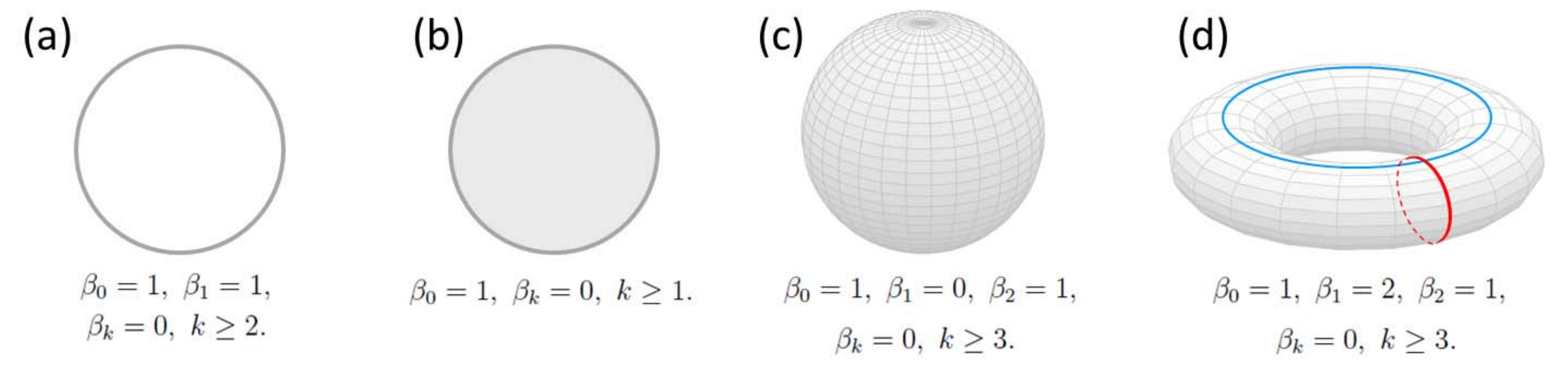}
\caption{{\footnotesize   The object in (a) is a 1-sphere, or a circle, i.e., $S^1 = \{x \in \R^2: | x | = 1\}$. The surface in (c) is a 2-sphere or $S^2$. Finally, (d) is a two-dimensional torus. Denoting the space corresponding to the torus as $X$, the blue and red cycles represent the generators of $H_1(X)$, and are regarded as non-equivalent cycles. Note that the torus is hollow, thus $\beta_2(X)$ = 1.}}
\label{f:examples}
\end{center}
\end{figure}

In recent years, there have been growing interests in the theory of random topology \cite{rgeom2011, bob2011, crackle, km, kmer, yogesh17}, exploring the probabilistic features of Betti numbers as well as related notions, for example, the number of critical
points of a certain distance function with a fixed Morse index. Additionally \cite{maximally} studied the maximal (persistent) $k$-cycles when an underlying distribution is a uniform Poisson process in the unit cube. Further, \cite{dec2014simp} investigated topology of a Poisson process on a $d$-dimensional torus. 
Those wishing to examine the properties of Betti numbers formed from points generated by a general stationary point process should consult \cite{yogadler2015, yogesh17}. An elegant summary on recent progress in the field is provided by  \cite{bobrowski:kahle:2018}. 
The topological objects in these studies are typically constructed from a \textit{geometric complex}. Among many choices of geometric complexes (see, e.g., \cite{ghrist2014}), the present paper deals with one of the most studied ones, a \textit{\v{C}ech complex}; see Figure \ref{f:cech}. 
\begin{definition} \label{cech}
If $t > 0$ and $\mathcal{X}$ is a collection of points in $\bbr^d$, the \v{C}ech complex $\C(\mathcal{X}, t)$ is defined as follows:
\begin{enumerate}
	\item The 0-simplices are the points in $\mathcal{X}$.
	\item A $k$-simplex $[x_{i_0}, \dots, x_{i_k}]$ is in $\C(\mathcal{X}, t)$ if $\bigcap_{j = 0}^k B(x_{i_j}; t/2) \neq \emptyset$, 
\end{enumerate}
where $B(x; r) = \{y \in \R^d: |x - y| < r\}$ is an open ball of radius $r$ around $x \in \R^d$.  
\end{definition}
\begin{figure}[!t]
\begin{center}
\includegraphics[width=9cm]{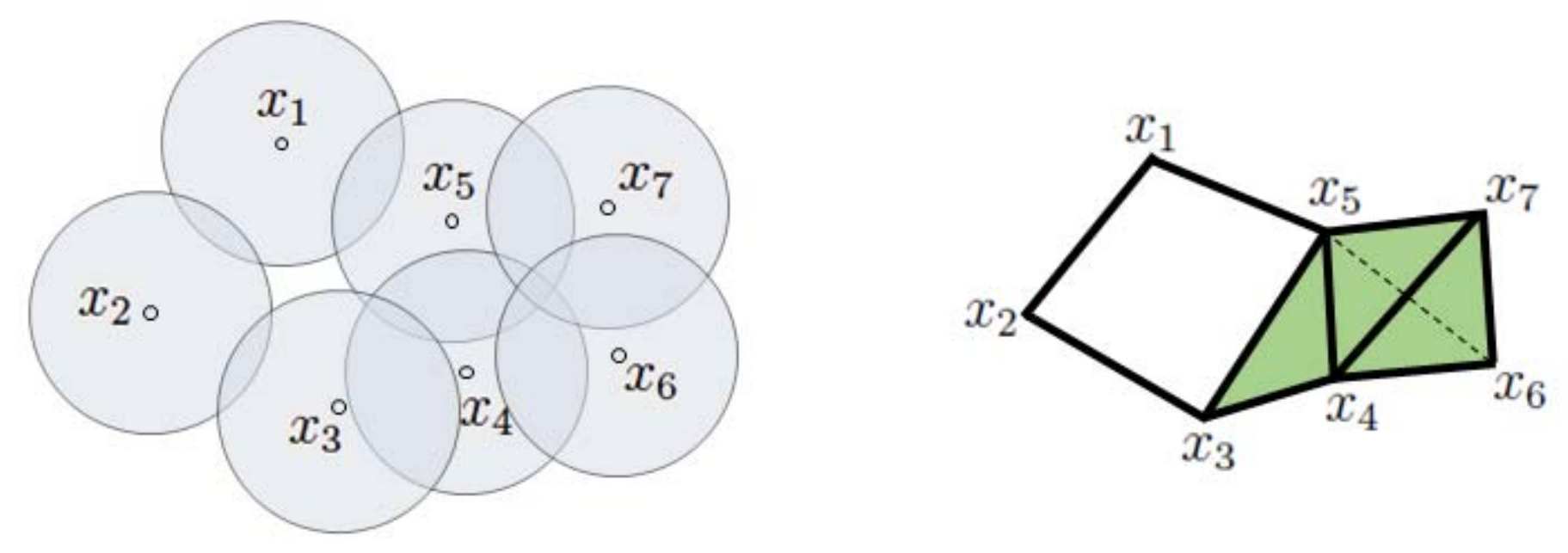}
\caption{{\footnotesize \v{C}ech complex $\check{C}(\mathcal X, t)$ with $\mathcal X=\{ x_1,\dots,x_7 \} \subset \bbr^2$. There are nine $1$-simplices with each adding a line segment joining a pair of the points. The $2$-simplex $[x_3,x_4,x_5]$ belongs to $\check{C}(\mathcal X, t)$, since the balls around these points have an non-empty intersection. The $3$-simplex $[x_4, x_5, x_6, x_7]$ represents a tetrahedron.  }}
\label{f:cech}
\end{center}
\end{figure}
One good reason for concentrating on the \v{C}ech complex is its topological equivalence to the union of balls $\bigcup_{y \in \mathcal X} B(y; t/2)$. A fundamental result known as the Nerve lemma (see, e.g., Theorem 10.7 of \cite{bjorner:1995}), asserts that the \v{C}ech complex and the union of balls are homotopy equivalent. In particular, they induce the same homology groups, that is for all $k \geq 0$
$$
H_k \big( \check{C}(\mathcal X, t) \big) \cong H_k \big(  \bigcup_{y \in \mathcal X} B(y; t/2) \big).
$$
The objective of the current paper is to investigate how the $k$th Betti number fluctuates as the sample size increases under the setup of \cite{km, bob2011, Bobrowski2015}. 
This setup necessarily dates back to the classical study on random geometric graphs as seen in the monograph \cite{penr}. This is due to the fact that a  \v{C}ech complex can be seen as a higher-dimensional analogue of a geometric graph. In fact, a geometric graph is actually a $1$-skeleton of a \v{C}ech complex. 
Let $\mathcal X_n$ be a set of random points on $\bbr^d$. Typically it represents $n$ i.i.d random points sampled from a probability density $f$ or a set of points taken from a Poisson process with intensity $nf$. Further, $r_n$ denotes a sequence of connectivity radii of a \v{C}ech complex (or a geometric graph). In this setting the behavior of $\check{C} (\mathcal X_n, r_n)$  is classified into several different regimes, depending on how $nr_n^d$ varies as $n\to\infty$. There is an intuitive meaning behind the quantity $nr_n^d$. It is actually the average number of points in a ball of radius $r_n$ around a point $x\in \bbr^d$, up to a proportionality constant. 

The first regime is that if $nr_n^d \to 0$ as $n\to\infty$, the complex is so sparse that many separate connected components are scattered throughout the space. This is called the \textit{sparse regime}. If the connectivity radii $r_n$ decays to $0$ more slowly, i.e., $nr_n^d \to \xi \in (0,\infty)$, then $\check{C}(\mathcal X_n, r_n)$ belongs to the \textit{critical regime}, in which the complex begins to be connected, forming much larger components with topological holes of various dimensions. Finally the case when $nr_n^d \to \infty$ is the \textit{dense regime}, for which the complex is highly connected with few topological holes. Detailed study of the Betti numbers has yielded a univariate central limit theorem for the sparse regime \cite{km, kmer} and for the critical regime \cite{yogesh17}. In addition \cite{km} has proven a Poisson convergence result of Betti numbers when $n^{k+2}r_n^{d(k+1)} \to \lambda \in (0,\infty)$ as $n\to\infty$, so that topological holes hardly ever occur. 

The main objective of this study is to generalize Betti numbers as a stochastic process and provide comprehensive results on limit theorems for the sparse, critical, and Poisson regimes. We shall consider the Betti  number of a \v{C}ech complex with radius $r_n(t) := s_nt$ :
\begin{equation}  \label{e:Betti.intro}
\beta_{k,n}(t) := \beta_k \big( \check{C} (\mathcal X_n, r_n(t)) \big),  \ \ \ t > 0. 
\end{equation}
Obviously \eqref{e:Betti.intro} gives a stochastic process in parameter $t$ with right continuous sample paths with left limits. 
With this functional setup, this paper reveals that when the \v{C}ech complex is relatively sparse, such as the sparse and Poisson regimes, the limiting process of $\beta_{k,n}(t)$ can be decomposed into the difference of well-known stochastic processes. Specifically, in the sparse regime we can decompose the limiting process into the difference of time-changed Brownian motions and in the Poisson regime we can decompose the limiting process as the difference of time-changed homogenous Poisson processes on the real half-line. In the critical regime however, the limiting process of $\beta_{k,n}(t)$ has much more complicated representation due to the emergence of connected components of larger size. In fact, the limiting process is denoted as the sum of infinitely many Gaussian processes with each representing connected components of size $i \geq k+2$ with $j$ topological holes. 

The motivation of reformulating Betti numbers as a stochastic process comes from an application to \textit{persistent homology}. Persistent homology is perhaps the most prominent and ubiquitous tool in TDA. Those needing a quick introduction should consult \cite{adler2010}. For surveys of applications of persistent homology see \cite{barcodes, carlsson2014topological, wasserman2016}. The first \cite{barcodes} is an essential and succinct overview. The second \cite{carlsson2014topological} gives a self-contained theoretical treatment of the topological and probabilistic aspects as well as detailed applications.The final one \cite{wasserman2016} gives an introduction to persistent homology and its applications from a statistical perspective. Theoretically rigorous treatment of persistence homology, especially the computational aspects, can be seen in \cite{edels2010, zomocarl}. 
Considering a family $\big( \check{C} (\mathcal X_n, r_n(t)), \, t > 0 \big)$ of \v{C}ech complexes and increasing radii $t$, the $k$th persistent homology provides a list of pairs (birth, death), representing the birth time (radius) at which a $k$-cycle is born and the death time (radius) at which it gets filled in and disappears. 
One of the typical applications of our results is the analysis on the sum of persistence barcodes, i.e. the sum of life lengths of all $k$-cycles up to time (radius) $t$, given by 
\begin{equation}  \label{e:sum.lifelength}
L_{k,n}(t) = \int_0^t \beta_{k,n}(s) \dif s,  \ \ \ t >0. 
\end{equation}
Of course, the limiting process of \eqref{e:sum.lifelength} is impossible to obtain from non-functional Betti numbers that do not involve parameter $t$. 
According to our results, however, it can be obtained as an integral of the limiting process of $\beta_{k,n}(t)$. Similar treatments of the stochastic process approach include \cite{owaSPB, owaFCLT}.

From the viewpoints of proof techniques we shall borrow ideas from \cite{penr, km, kmer} and apply sharper variance/covariance bounds than those given in \cite{kmer} for the analysis of the critical regime. Using these sharper bounds, the central limit theorem proven for the sparse regime no longer requires $s_n = o(n^{-1/d - \delta})$ for some $\delta > 0$ in the case that $n^{k+3}s_n^{d(k+2)}$ is bounded away from zero, as is assumed in \cite{kmer}. The argument for the Poisson regime uses a completely different technique based on \cite{dfsch16}. 

As a final remark, unlike \cite{penr, km, kmer} we do not consider points generated by a binomial process. Further studies would have to perform ``De-Poissonization'' as seen in section 2.5 of \cite{penr}. We have skipped these results not only for brevity but because they are highly technical and add little to the intuition behind our results. 

The structure of the paper is as follows. The second section details our setup and all the notation needed to appropriately and succinctly elucidate our results. The third section details the central limit theorem for the sparse regime, i.e. when we have $ns_n^d \to 0$ and $n^{k+2}s_n^{d(k+1)} \to \infty$. The fourth section is about the critical regime, in which $ns_n^d=1$, and Section \ref{s:Poisson} is dedicated to investigating the Poisson regime with $n^{k+2} s_n^{d(k+1)} = 1$. The major part of Section \ref{s:proofs} is devoted to proving limit theorems for the critical and Poisson regimes. The proof for the sparse regime can be obtained immediately via simple modification of the critical regime case. 

\section{Setup}

To begin, we start by defining some essential concepts towards proving the results in this paper. Due to the ease of proofs in the case of ``Poissonization''  we only look at point clouds generated by $\Pn$, a Poisson process on $\R^d$, $d\geq 2$. We take $\Pn$ to have the intensity measure $\lambda$ which we define as $\lambda(A) = n\int_A f(x)\dif{x}$ for all measurable $A$ in $\R^d$. In the previous definition $f$ is a probability density that is almost surely bounded and continuous with respect to Lebesgue measure. Throughout the paper, Lebesgue measure on $\R^{d(k+1)}$ is denoted by $m_k$ and for convenience we let $m := m_0$.

As an aside, we have a few definitions to mention before commencing. First, let $\lVert f \rVert_{\infty}$ be the essential supremum of the aforementioned $f$, which is finite as $f$ is almost surely bounded. Furthermore, define $\theta_d := m(B(0; 1))$ to be the volume of the unit ball in $\R^d$. The constant $C_{f,k}$ is mentioned frequently in the study and is defined as the quantity 
\[
C_{f,k} := \frac{1}{(k+2)!} \int_{\R^d} f(x)^{k+2} \dif{x}.
\]
Furthermore we let $\R_+ := [0, \infty)$ and $\mathbb{N}$ be the positive integers and $\mathbb{N}_0 := \mathbb{N} \cup \{0\}$---the non-negative integers, with $\one \bigl\{  \cdot \bigr\}$ denoting an indicator function.

It is useful to define the notion of a finite point cloud throughout the study. We let $\mathcal X_m:= \{X_1, X_2, \dots, X_m\}$ where $X_i$ are $\iid$ with density $f$ as mentioned before, though let it represent an arbitrary subset of $\R^d$ of cardinality $m$ as needed. Thus if $N_n$ is a Poisson random variable with parameter $n$, then we can represent the Poisson  process $\Pn$ as 
\[
\Pn(A) = \sum_{i = 1}^{N_n} \delta_{X_i}(A),
\]
for all measurable $A \subset \R^d$, with $\delta_x$ a Dirac measure at $x \in \R^d$.

With this definition in tow, we turn towards the study of Betti numbers. 

Fixing $1\leq k <d$, we define $h_t(x_1, \dots, x_{k+2})$, $x_i \in \bbr^d$, to be the indicator that $\C(\{x_1, x_2, \dots, x_{k+2}\}, t)$ contains an \emph{empty} $(k+1)$-simplex. This means that $\C(\{x_1, x_2, \dots, x_{k+2}\}, t)$ does not contain a $(k+1)$-simplex but does contain all possible $k$-simplices. 

With this in mind, we see that $h_t$ can be represented as
\[
h_t(x_1,\dots, x_{k+2}) = h_t^+(x_1,\dots,x_{k+2}) - h_t^-(x_1,\dots,x_{k+2}),
\]
where we define 
\begin{align*}
 h_t^+(x_1,\dots,x_{k+2}) &:= \prod_{i=1}^{k+2} \one \Bigl\{ \bigcap_{j=1, \, j \neq i}^{k+2} B(x_j; t/2)  \neq \emptyset  \Bigr\}, \\
 h_t^-(x_1,\dots,x_{k+2}) &:= \one \Bigl\{\, \bigcap_{j=1}^{k+2} B(x_j; t/2) \neq \emptyset \Bigr\}.
\end{align*}
It is important to note that $h_t^\pm$ is non-decreasing in $t$. That is, 
$$
h_s^\pm (x_1,\dots,x_{k+2}) \leq h_t^\pm (x_1,\dots,x_{k+2})
$$ 
for all $0 \leq s < t$ and $x_i \in \bbr^d$. 

Throughout the paper we interest ourselves in the $k$th Betti number for $\C(\Pn, r_n(t))$ where $r_n(t) := s_nt$. Recall that the nature of how $s_n$ decays to $0$ as $n \to \infty$ is the object of our study. We denote by $S_{k,n}(t)$ the number of empty $(k+1)$-simplex components of $\C(\Pn, r_n(t))$. In other words, $S_{k,n}(t)$ represents the number of connected components $C$ on $k+2$ points such that $\beta_k(C)=1$. More generally, for integers $i \geq k + 2$ and $j > 0$, we define $U_{i,j,n}(t)$ as the number of connected components $C$ of $\check{C}(\Pn, r_n(t))$ such that $|C| = i$ and $\beta_k(C) = j$. Then the \emph{k}th Betti number of $\check{C}(\Pn, r_n(t))$ can be represented as
\begin{equation} \label{e:Betti1}
\beta_{k, n}(t) =  \sum_{i \geq k+2} \sum_{j > 0} jU_{i,j}(t), \ \ \ t > 0.
\end{equation}
Since $S_{k,n}(t) = U_{k+2,1,n}(t)$ and one cannot form multiple empty $(k+1)$-simplices from $k+2$ points, \eqref{e:Betti1} can also be represented as 
\begin{equation}  \label{e:Betti2}
\beta_{k,n}(t) =   S_{k,n}(t) +  \sum_{i > k+2} \sum_{j > 0}  jU_{i,j,n}(t), \ \ \ t > 0. 
\end{equation}

In this setting it is instructive to introduce the following indicator functions to formalize these concepts for an arbitrary collection of points $\Y \subset \mathcal{X} \subset \R^d$: 
\begin{itemize} \itemsep0.5em
\item $J_{i,t}(\Y, \mathcal{X}) := \one \bigl\{ \check{C} (\Y, t) \text{ is a connected component of } \check{C}(\mathcal X, t)  \bigr\}\, \one \bigl\{ |\Y|=i \}$.
\item $b_{j,t}(\Y) := \one \bigl\{ \beta_k \bigl( \check{C} (\Y, t)  \bigr)=j \bigr\}$. 
\item $g_t^{(i,j)}(\Y, \mathcal{X}) := b_{j,t}(\Y) J_{i, t}(\Y, \mathcal{X})$.
\end{itemize}
In particular, denote
$$
g_t(\Y, \mathcal X) := g_t^{(k+2,1)}(\Y, \mathcal{X}) = b_{1,t}(\Y) J_{k+2, t}(\Y, \mathcal{X}) = h_t(\Y) J_{k+2, t}(\Y, \mathcal{X}).  
$$
Additionally, for $A \subset \mathbb{R}^d$,  let 
	\begin{itemize} \itemsep0.5em \vspace{0.5em}
	\item $h_{t,A}(\Y) := h_t(\Y)\one \{ \text{LMP} (\Y) \in A \}$,
	\item $g_{t, A}^{(i,j)}(\Y, \mathcal{X}) := g_t^{(i,j)}(\Y, \mathcal{X})\one \{ \text{LMP} (\Y) \in A \}$,
	\end{itemize}
where $\mathrm{LMP}(\Y)$ is the left-most point, in dictionary order, of the set $\Y$.

With the above indicators now available, it is clear that $S_{k,n}(t) = \sum_{\Y \subset \Pn} \grnt(\Y, \Pn)$ and $U_{i,j,n}(t) = \sum_{\Y \subset \Pn} \grntij (\Y, \Pn)$. As a final bit of notation, let 
\[
\beta_{k,n,A}(t) =  \sum_{i \geq k+2} \sum_{j > 0}  jU_{i,j,n,A}(t) = S_{k,n,A}(t) +  \sum_{i > k+2} \sum_{j > 0}  jU_{i,j,n,A}(t),
\]
where we require the left-most point of every subset $\Y$ to be an element of $A$ in the calculation of the $k$th Betti number. When brevity is paramount, we occasionally shorten $ \sum_{i > k+2} \sum_{j > 0} jU_{i,j,n}(t)$ to $R_{k,n}(t)$ and $ \sum_{i> k+2} \sum_{j > 0} jU_{i,j,n,A}(t)$ to $R_{k,n, A}(t)$ respectively. 

\section{Sparse regime}  \label{s:subcritical}
Throughout this section we assume that $ns_n^d \to 0$ and $\rho_n := n^{k+2} s_n^{d(k+1)} \to \infty$ as $n \to \infty$. The essence of the sparse regime is that \v{C}ech complexes are distributed sparsely with many separate connected components, because of a fast decay of $s_n$ as a result of $ns_n^d\to0$. Consequently, all $k$-cycles in the limit are supported exactly on $k+2$ points ($k+2$ is a ``minimum" number necessary to form a $k$-cycle). From a more analytic viewpoint, the behavior of the $k$th Betti number \eqref{e:Betti2} is completely determined by $S_{k,n}(t)$, whereas $R_{k,n}(t) = \beta_{k,n}(t) - S_{k,n}(t)$ is asymptotically negligible. 

The most relevant study to this section is \cite{km}, in which the central limit theorem for the sparse regime is discussed. We have extended \cite{km} (with the erratum paper \cite{kmer}) in twofold directions.  First, we develop the process-level central limit theorem for the sparse regime. This highlights the chief contribution of this paper. Whereas \cite{km, kmer}, as well as \cite{yogesh17} in the ensuing section, treat the ``static" topology of random \v{C}ech complexes (i.e., no time parameter $t$ involved), the main focus of this paper is ``dynamic'' topology of the same complex, treating Betti numbers as a stochastic process. Second, our central limit theorem is for the entirety of the sparse regime, without requiring that $s_n =  o(n^{-1/d  -\delta})$ for some $\delta > 0$ as assumed in \cite{kmer}. 

Before presenting the main result we define the limiting stochastic process
\begin{equation}  \label{e:limit.subcritical}
\mathcal{G}_k(t) := \int_{\R^{d(k+1)}} h_t(0, \mathbf{y}) \, G_k(\dif{\mathbf{y}}),
\end{equation}
where $G_k$ is a Gaussian random measure such that $G_k(A) \sim \mathcal N(0, C_{f, k}m_{k}(A))$ for all measurable $A$ in $\R^{d(k+1)}$. Furthermore, for $A_1, \dots, A_m$ disjoint, $G_k(A_1), \dots, G_k(A_m)$ are independent. As defined, $\G_k(t)$ depends on the indicator $h_t$, meaning that due to sparsity of the \v{C}ech complex in this regime, the $k$-cycles affecting $\G_k(t)$ must be always formed by connected components on $k+2$ points (i.e., components of the smallest size). 

The significance of the characterization of the process at \eqref{e:limit.subcritical} is that if we define 
\[
\mathcal{G}^{\pm}_k(t) := \int_{\R^{d(k+1)}} h^{\pm}_t(0, \by) \, G_k(\dif{\by}), 
\]
then $\mathcal{G}^{\pm}_k(t)$ becomes a time-changed Brownian motion; see Proposition \ref{P:gauproc} below.  Hence $\G_k(t) = \mathcal{G}^{+}_k(t) - \mathcal{G}^{-}_k(t)$ 
is a difference of two dependent time-changed Brownian motions, where dependence is due to the same Gaussian random measure $G_k$ shared by $\G_k^+(t)$ and $\G_k^-(t)$. Those wishing to examine this characterization in more detail should refer to \cite{owaFCLT}. For example, it is proven in \cite{owaFCLT} that the process $\G_k(t)$ is self-similar with exponent $H=d(k-1)/2$ and is H\"older continuous of any order in $[0,1/2)$. 

\begin{proposition}\label{P:gauproc}
The process $\mathcal{G}^{\pm}_k(t)$ can be expressed as 
\[
(\mathcal{G}^{\pm}_k(t), t \geq 0) \overset{d}{=} \Big(B(C_{f,k}m_k(D_1^{\pm})t^{d(k+1)}), t \geq 0\Big), 
\]
where $B$ is a standard Brownian motion and $D_t^{\pm} = \{\by \in \R^{d(k+1)}: h^{\pm}_t(0, \by) =1\}$.
\end{proposition}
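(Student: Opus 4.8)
The plan is to exploit the fact that $h_t^{\pm}$ takes only the values $0$ and $1$, so that the Wiener-type integral defining $\mathcal{G}_k^{\pm}(t)$ collapses to the $G_k$-mass of a single set. With $D_t^{\pm} = \{\by \in \R^{d(k+1)} : h_t^{\pm}(0,\by) = 1\}$ as in the statement, I would first write $\mathcal{G}_k^{\pm}(t) = \int_{\R^{d(k+1)}} \one\{\by \in D_t^{\pm}\}\, G_k(\dif\by) = G_k(D_t^{\pm})$. Since any finite linear combination $\sum_i a_i\, G_k(D_{t_i}^{\pm})$ is a linear functional of the Gaussian random measure, it is Gaussian, so $(\mathcal{G}_k^{\pm}(t), t \geq 0)$ is a centered Gaussian process. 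A time-changed Brownian motion is likewise centered Gaussian, so the entire argument reduces to matching the two covariance functions.

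Next I would record the scaling identity $h_t^{\pm}(0,\by) = h_1^{\pm}(0, \by/t)$, which follows because rescaling all points and all radii by $1/t$ preserves the (non)emptiness of the ball intersections defining $h_t^+$ and $h_t^-$. Hence $D_t^{\pm} = t\, D_1^{\pm}$, and since Lebesgue measure on $\R^{d(k+1)}$ is homogeneous of degree $d(k+1)$, we get $m_k(D_t^{\pm}) = t^{d(k+1)} m_k(D_1^{\pm})$. Before invoking this I would verify that $m_k(D_1^{\pm}) < \infty$: the conditions defining $h_1^{\pm}$ force every pair among the $k+2$ points to lie within distance $1$ of one another (for $h^+$ this uses $k \geq 1$, so that every pair appears in some $(k+1)$-subset), so with one point pinned at the origin the set $D_1^{\pm}$ is contained in $B(0;1)^{k+1}$ and therefore has finite measure bounded by $\theta_d^{\,k+1}$.

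For the covariance I would use the identity $\Cov(G_k(A), G_k(A')) = C_{f,k}\, m_k(A \cap A')$, which comes from the defining properties of $G_k$ via the decomposition $G_k(A) = G_k(A \cap A') + G_k(A \setminus A')$ into independent pieces. Thus for $0 \leq s \leq t$, $\Cov(\mathcal{G}_k^{\pm}(s), \mathcal{G}_k^{\pm}(t)) = C_{f,k}\, m_k(D_s^{\pm} \cap D_t^{\pm})$. The key observation is that the monotonicity $h_s^{\pm} \leq h_t^{\pm}$ for $s < t$, already noted in the setup, makes the sets nested, $D_s^{\pm} \subseteq D_t^{\pm}$, so $D_s^{\pm} \cap D_t^{\pm} = D_s^{\pm}$ and the covariance equals $C_{f,k}\, m_k(D_1^{\pm})\, s^{d(k+1)} = \sigma(s \wedge t)$, where $\sigma(u) := C_{f,k}\, m_k(D_1^{\pm})\, u^{d(k+1)}$. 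Since $\sigma$ is nondecreasing, $B(\sigma(\cdot))$ is centered Gaussian with $\Cov(B(\sigma(s)), B(\sigma(t))) = \sigma(s) \wedge \sigma(t) = \sigma(s \wedge t)$, matching exactly; as two centered Gaussian processes with identical covariance functions share all finite-dimensional distributions, the asserted equality in law follows.

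The computation is short, and I expect the only genuinely substantive points to be recognizing that the integral is simply $G_k(D_t^{\pm})$ and that the sets $D_t^{\pm}$ are nested, the latter being precisely what converts the covariance into the $\sigma(s \wedge t)$ form characteristic of a time-changed Brownian motion. The main care will go into the scaling and finiteness check for $m_k(D_1^{\pm})$ and into stating the covariance identity for the Gaussian random measure precisely; the path-level statement, if desired beyond equality of finite-dimensional distributions, would be handled by noting the right-continuity of $t \mapsto \mathcal{G}_k^{\pm}(t)$.
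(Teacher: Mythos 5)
Your proposal is correct and follows essentially the same route as the paper: identify $\mathcal{G}_k^{\pm}(t)$ with $G_k(D_t^{\pm})$, use the monotonicity of $h_t^{\pm}$ to get the nesting $D_s^{\pm}\subset D_t^{\pm}$, and match the resulting covariance $C_{f,k}m_k(D_1^{\pm})(s\wedge t)^{d(k+1)}$ with that of a time-changed Brownian motion. The paper's proof is terser (it omits the scaling and finiteness checks you spell out), but the substance is identical.
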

\begin{proof}
We prove only the result for $\mathcal{G}^{+}_k$, as the proof for $\mathcal{G}^{-}_k$ is the same. It is elementary to show that $\mathcal{G}^{+}_k(t)$ has mean zero. Thus, it only remains to demonstrate the covariance result. Since $h_t^+$ is non-decreasing in $t$, we have $D_{t_1}^+ \subset D_{t_2}^+$ for $0 \leq t_1 \leq t_2$; therefore,  
\begin{align*}
\E\bigl[\mathcal{G}^{+}_k(t_1)\mathcal{G}^{+}_k(t_2)\bigr] &= \E\bigl[G_k(D_{t_1}^+) G_k(D_{t_2}^+)] = \E\bigl[G_k(D_{t_1}^+)^2] \\
&= C_{f,k} m_k(D_{t_1}^+) = C_{f,k}m_k(D_1^{+})t_1^{d(k+1)}. 
\end{align*}
\end{proof}
Our main result can be seen below. The proof is briefly presented in Section \ref{s:proof.subcritical} as a straightforward variant of the proof for the critical regime. For the proof we need to examine the asymptotic growth rate of expectations and covariances of $\beta_{k,n}(t)$. The detailed results are presented in Proposition \ref{P:covsparse}, where it is seen that the expectation and covariance both grow at the rate  $\rho_n$.

\begin{theorem}\label{T:sparse}
Suppose that $ns_n^d \to 0$ and $\rho_n = n^{k+2}s_n^{d(k+1)} \to \infty$. Assume that $f$ is an almost everywhere bounded and continuous density function. Then, we have the following weak convergence in the finite dimensional sense, namely
\begin{equation*}  
\rho_n^{-1/2} \bigl( \beta_{k,n}(t) - \E[\beta_{k,n}(t)] \bigr) \overset{fidi}{\Rightarrow} \mathcal{G}_k(t),
\end{equation*}
meaning that for every $m \in \mathbb{N}$ and $0 < t_1 < t_2 < \dots < t_m < \infty$ we have
\begin{equation*}
\rho_n^{-1/2}\Bigl( \beta_{k,n}(t_i) - \E[\beta_{k,n}(t_i)], \, i=1,\dots,m \Bigr) \Rightarrow \bigl( \mathcal{G}_k(t_i),\, i=1,\dots,m \bigr)
\end{equation*}
weakly in $\R^m$. 
\end{theorem}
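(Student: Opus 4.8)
The plan is to reduce the full Betti-number process to its dominant summand $S_{k,n}$ and then to prove a joint central limit theorem for $S_{k,n}$ at finitely many radii. Writing $\beta_{k,n}(t) = S_{k,n}(t) + R_{k,n}(t)$ as in \eqref{e:Betti2}, the first step is to show that the remainder is negligible at scale $\rho_n^{1/2}$, i.e. $\rho_n^{-1/2}\bigl(R_{k,n}(t) - \E[R_{k,n}(t)]\bigr) \to 0$ in $L^2$ for each fixed $t$. This should follow from the variance estimates of Proposition \ref{P:covsparse}: a connected component on $i > k+2$ points requires a configuration whose expected count is of strictly smaller order than $\rho_n$ once $ns_n^d \to 0$, so both $\rho_n^{-1}\Var(R_{k,n}(t))$ and $\rho_n^{-1}\lvert\Cov(S_{k,n}(t),R_{k,n}(t))\rvert$ vanish. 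By Slutsky's theorem it then suffices to prove the stated convergence with $\beta_{k,n}$ replaced by $S_{k,n}$.

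By the Cramér--Wold device the finite-dimensional statement reduces to a one-dimensional CLT for an arbitrary linear combination. Fixing $0 < t_1 < \cdots < t_m$ and reals $a_1,\dots,a_m$, set
\[
W_n := \rho_n^{-1/2}\sum_{i=1}^m a_i\bigl(S_{k,n}(t_i) - \E[S_{k,n}(t_i)]\bigr) = \rho_n^{-1/2}\Bigl(\sum_{\Y\subset\Pn}\varphi(\Y,\Pn) - \E\bigl[{\textstyle\sum_{\Y\subset\Pn}}\varphi(\Y,\Pn)\bigr]\Bigr),
\]
where $\varphi(\Y,\Pn):=\sum_{i=1}^m a_i\,g_{r_n(t_i)}(\Y,\Pn)$. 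Since $W_n$ is again a single centered sum of the same local scores, the whole machinery applies to it directly. I would first pin down its variance. Using the multivariate Mecke formula and the change of variables $x_\ell = x_1 + s_n u_{\ell-1}$ ($\ell=2,\dots,k+2$), whose Jacobian produces the factor $s_n^{d(k+1)}$, together with scale covariance of the indicators and continuity of $f$, one obtains
\[
\rho_n^{-1}\Cov\bigl(S_{k,n}(s),S_{k,n}(t)\bigr) \longrightarrow C_{f,k}\int_{\R^{d(k+1)}} h_s(0,\by)\,h_t(0,\by)\,m_k(\dif\by) = \E\bigl[\G_k(s)\G_k(t)\bigr],
\]
which is exactly the covariance of the limit process \eqref{e:limit.subcritical}. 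Here $ns_n^d \to 0$ forces the isolation indicator inside $g$ to contribute a factor tending to $1$ and makes the off-diagonal contributions (distinct supporting subsets) lower order, so the diagonal term dominates. Summing against the $a_i$ gives $\Var(W_n) \to \sum_{i,j} a_i a_j \E[\G_k(t_i)\G_k(t_j)]$.

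The core of the argument is the asymptotic normality of $W_n$. I would establish it by the method of moments, equivalently by showing that all cumulants of $W_n$ of order $\geq 3$ vanish, following the combinatorial scheme of \cite{penr, km, kmer}. The structural fact driving this is locality: $g_{r_n(t)}(\Y,\Pn)$ is nonzero only when the $k+2$ points of $\Y$ lie within $O(s_n)$ of each other and no other point of $\Pn$ lies within $O(s_n)$, so two scores can interact only when their supporting sets are within distance $O(s_n)$, and the expected number of such interacting pairs, triples, and so on is of strictly smaller order than the corresponding fully-disjoint contributions when $ns_n^d\to 0$. Organizing the $p$th moment computation according to the partition of the involved subsets into clusters of mutually interacting configurations, one shows that only the fully-paired partitions survive in the limit, which is precisely the Gaussian (Wick) moment structure; combined with the variance computed above this yields $W_n \Rightarrow \mathcal N\bigl(0,\sum_{i,j}a_ia_j\E[\G_k(t_i)\G_k(t_j)]\bigr)$.

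I expect this last step to be the main obstacle. The scores carry the nonlocal isolation indicator $J_{k+2,r_n(t)}(\Y,\Pn)$, which depends on the entire process, so the moment bookkeeping must carefully disentangle the internal geometry of each $\Y$ (governed by $h^\pm$) from the isolation event, and show that correlations induced through shared isolation regions are negligible at order $\rho_n$. The condition $ns_n^d \to 0$ is exactly what makes these corrections vanish, and it is where the improvement over \cite{kmer} --- removing the extra hypothesis $s_n = o(n^{-1/d-\delta})$ --- must be extracted, through the sharper variance and covariance bounds recorded in Proposition \ref{P:covsparse}. Once the moment method delivers a Gaussian limit for every linear combination with the variance above, the Cramér--Wold device yields the claimed fidi convergence to $\G_k$, completing the proof.
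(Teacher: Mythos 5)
Your overall architecture coincides with the paper's: decompose $\beta_{k,n}=S_{k,n}+R_{k,n}$, kill the remainder at scale $\rho_n^{1/2}$ via the variance/covariance bounds of Proposition \ref{P:covsparse} (in the paper these are the statements $\rho_n^{-1}D_{2,n}\to 0$ and $\rho_n^{-1}A_{2,n}\to 0$, proved with the spanning-tree estimates \eqref{e:obvbound}--\eqref{e:spanning.tree}), reduce to a univariate CLT by Cram\'er--Wold, and identify the limiting covariance $\mu_{k,\R^d}(t_i,t_j)=C_{f,k}\int h_{t_i}(0,\by)h_{t_j}(0,\by)\dif\by$ by Palm/Mecke calculus and the usual change of variables. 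Where you diverge is the normality step itself: you propose the method of moments, showing all cumulants of order at least three vanish via a cluster/diagram expansion, whereas the paper uses Stein's normal approximation for dependency graphs (Theorem 2.4 of \cite{penr}). Concretely, the paper tiles $\R^d$ by cubes $Q_{j,n}$ of side $r_n(t)$, sets $\xi_{j,n}$ to be the local contribution with left-most point in $A\cap Q_{j,n}$, declares $j\sim j'$ when the cubes are within $2(k+2)r_n(t)$, and then only needs uniform bounds on $\E[|\xi_{j,n}|^3]$ and $\E[|\xi_{j,n}|^4]$, obtained by stochastically dominating the local point count by a Poisson variable with parameter $O(ns_n^d)$; the Berry--Esseen-type bound then vanishes because $s_n^{-d}\rho_n^{-p/2}(ns_n^d)^{k+2}=\rho_n^{1-p/2}\to 0$ for $p=3,4$. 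This sidesteps exactly the obstacle you flag as the main difficulty---disentangling the nonlocal isolation indicator across all moment orders---since the dependency-graph structure localizes the bookkeeping to fourth moments of block sums. Your cumulant route is viable for these stabilizing scores with interaction radius $O(s_n)$ and would deliver the same conclusion, but at the cost of a substantially heavier combinatorial argument; note also that the removal of the hypothesis $s_n=o(n^{-1/d-\delta})$ from \cite{kmer} comes from the sharper bounds in Proposition \ref{P:covsparse} used in the remainder step, not from the choice of normality technique.
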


\section{Critical regime}

We now expand on the results of \cite{yogesh17} by offering an explicit limit of appropriately scaled moments and a central limit theorem for $\beta_{k,n}(t)$. In the critical regime, the connectivity radius $s_n$ is 
defined to be $s_n = n^{-1/d}$. This sequence decays more slowly than that in the previous section; hence,  \v{C}ech complexes become highly connected with many topological holes of any dimension $k< d$. More analytically, all terms in the sum \eqref{e:Betti1} contribute to the $k$th Betti number, unlike in the sparse regime. This implies that the $k$-cycles in the limit could be supported not only on $k+2$ points but also on $i$ points for all possible $i > k+2$. 

As a related work, \cite{yogesh17} also established a central limit theorem for the critical regime (though \cite{yogesh17} referred to it as the ``thermodynamic'' regime). There are two key differences between that paper and ours. The first is that the Poisson process they consider is stationary with unit intensity, restricted to a set $B_n$ such that $m(B_n) = n$. The second difference between the two, and equivalent to the contrast indicated in the sparse regime, is again that \cite{yogesh17} treats the static topology of random \v{C}ech complexes whereas we treat the dynamic topology. As a consequence, while the weak limit in \cite{yogesh17} is a simple Gaussian distribution with unknown variance, our limit is a Gaussian process having structure similar to that of the Betti number \eqref{e:Betti1}.

We now define the limiting Gaussian process $\mathcal H_k(t)$
\begin{equation}  \label{e:Gaussian.series.representation}
\mathcal H_k(t)= \sum_{i \geq k+2} \sum_{j>0} j \mathcal H_k^{(i,j)}(t), \ \ \ t > 0,  
\end{equation}
where $\big( \mathcal H_k^{(i,j)}, \, i \geq k+2, j>0 \big)$ is a family of centered Gaussian processes with inter-process dependence between $\mathcal H_k^{(i_1,j_1)}$ and $\mathcal H_k^{(i_2,j_2)}$ determined by  
\begin{equation}  \label{e:cov.critical1}
\text{Cov} \big( \Hk^{(i_1,j_1)}(t_1), \Hk^{(i_2,j_2)}(t_2) \big) = \frac{1}{i_1!}\, \etakrd^{(i_1, j_1, j_2)}(t_1, t_2) \delta_{i_1, i_2} + \frac{1}{i_1! i_2!}\, \nukrd^{(i_1, i_2, j_1, j_2)} (t_1, t_2). 
\end{equation}
Here $\delta_{i_1, i_2}$ is the Kronecker delta, and the functions $\etakrd^{(i_1, j_1, j_2)}$, $\nukrd^{(i_1, i_2, j_1, j_2)}$ are explicitly defined during the proof of the main theorem (see \eqref{e:eta.k} and \eqref{e:nu.k}). From \eqref{e:cov.critical1}, the covariance of $\mathcal H_k^{(i,j)}$ is given by 
\begin{equation*}  \label{e:cov.critical2}
\text{Cov} \big( \Hk^{(i,j)}(t_1), \Hk^{(i,j)}(t_2) \big) = \frac{1}{i!}\, \etakrd^{(i, j, j)}(t_1, t_2) + \frac{1}{(i!)^2}\, \nukrd^{(i, i, j, j)} (t_1, t_2). 
\end{equation*}

The main point here is that the Betti number \eqref{e:Betti1} and the limit \eqref{e:Gaussian.series.representation} are represented in a very similar fashion. In fact, the process $U_{i,j,n}(t)$ in \eqref{e:Betti1} and $\Hk^{(i,j)}(t)$ in \eqref{e:Gaussian.series.representation} both capture the spatial distribution of connected components $C$ with $|C| = i$ and $\beta_k(C) = j$. 
In particular, $\Hk^{(k+2,1)}(t)$ represents the distribution of components $C$ on $k+2$ points with $\beta_k(C)=1$ (i.e., components of the smallest size) as does $\G_k(t)$ in the sparse regime. In the present regime however, many of the Gaussian processes in \eqref{e:Gaussian.series.representation} beyond $\Hk^{(k+2,1)}(t)$, do contribute to the limit. 

As a bit of a technical remark, note that for every $i \geq k+2$, there exists $j_0 > 0$ such that $b_{j,t}(\bx)=0$ for all $j \geq j_0$, $t>0$, and $\bx \in \bbr^{di}$. In this case, 
\[
\etakrd^{(i,j,j)}(t,t) = \nukrd^{(i,i,j,j)}(t,t) = 0,
\] and thus $\Hk^{(i,j)}$ becomes an identically zero process. For example, $\Hk^{(k+2,j)} \equiv 0$ for all $j \geq 2$, since one cannot create multiple $k$-cycles from $k+2$ points. 

In the result below  we let $ns_n^d = 1$, though we could easily suppose that $ns_n^d \to 1$ as $n \to \infty$. All proofs are collected in Section \ref{s:proof.critical}. Our proof is highly analytic in nature, borrowing techniques from \cite{penr} and \cite{km}, whereas the proof of \cite{yogesh17} relies more on topological nature of the objects. In particular we will see that the growth rate of the expectation and variance of $\beta_{k,n}(t)$ is of order $n$---see Proposition \ref{P:covcritical}. This indicates that the scaling constant for the central limit theorem must be of order $n^{1/2}$. 
\begin{theorem}\label{T:cclt}
Suppose that $ns_n^d = 1$ and $f$ is an almost everywhere bounded and continuous density function. If $0 < t_1 < t_2 < \dots < t_m < (e \lVert f \rVert_{\infty} \theta_d)^{-1/d}$, and $\Hk(t)$ is the centered Gaussian process defined above, then we have the following weak convergence in the finite dimensional sense, namely
\[
n^{-1/2}\big(\beta^{}_{k,n}(t) - \E[\beta_{k,n}(t)]\big) \overset{fidi}{\Rightarrow} \mathcal{H}_k(t). 
\]
This means that for every $m \in \mathbb{N}$ we have 
\begin{equation*} \label{e:crit.beta}
n^{-1/2}\bigl( \beta_{k,n}(t_i) - \E[\beta_{k,n}(t_i)], \, i=1,\dots,m \bigr) \Rightarrow \bigl(\mathcal{H}_k(t_i), \,  i=1,\dots,m\bigr),
\end{equation*} 
weakly in $\R^m$. 
\end{theorem}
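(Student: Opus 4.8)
The plan is to prove the multivariate statement through the Cram\'er--Wold device: fixing $m$, times $0 < t_1 < \dots < t_m < (e\lVert f \rVert_{\infty} \theta_d)^{-1/d}$, and reals $a_1,\dots,a_m$, it suffices to show that the scalar
\[
W_n := n^{-1/2} \sum_{l=1}^m a_l \bigl( \beta_{k,n}(t_l) - \E[\beta_{k,n}(t_l)] \bigr)
\]
converges weakly to a centered normal with variance $\sum_{l,l'} a_l a_{l'}\, \Cov\bigl(\Hk(t_l), \Hk(t_{l'})\bigr)$. By the decomposition \eqref{e:Betti1}, $W_n$ is a linear combination of the centered counts $U_{i,j,n}(t_l)$, each of which is a sum over subsets $\Y \subset \Pn$ of the geometrically localized indicator $\grntij(\Y, \Pn)$.

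First I would truncate the infinite sum over the component size $i$. Writing $\beta_{k,n}(t) = \sum_{i=k+2}^{i_0}\sum_{j>0} j\, U_{i,j,n}(t) + T_{i_0,n}(t)$, I would bound $n^{-1}\Var\bigl(T_{i_0,n}(t)\bigr)$ and show it vanishes as first $n\to\infty$ and then $i_0\to\infty$, uniformly over $t \le t_m$. The crucial input is a variance bound of the form $\Var\bigl(U_{i,j,n}(t)\bigr) \le C\, n\, \bigl(e\lVert f\rVert_{\infty}\theta_d t^d\bigr)^{\,i}$ up to polynomial factors in $i$; the base arises by counting connected configurations on $i$ points (at most $i^{i-2}$ spanning trees) against a volume factor of order $\bigl(\theta_d\lVert f\rVert_{\infty} t^d s_n^d\bigr)^{i-1}$, dividing by $i!$, and invoking Stirling's formula. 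It is strictly less than $1$ precisely because $t \le t_m < (e\lVert f\rVert_{\infty}\theta_d)^{-1/d}$, which gives the geometric summability needed to reduce the problem to the finite range $k+2 \le i \le i_0$.

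For the truncated statistic $W_n^{(i_0)}$ I would establish asymptotic normality via a normal-approximation bound for Poisson functionals. If $\check{C}(\Y, r_n(t))$ is a connected component on $i$ points its diameter is at most $(i-1)s_n t$, so $\grntij(\Y,\Pn)$ depends on $\Pn$ only through points within distance $s_n t$ of $\Y$; each summand is thus a stabilizing functional at scale $s_n = n^{-1/d}$. I would then control the Wasserstein distance between $W_n^{(i_0)}$ and a Gaussian using the first and second add-one operators $D_x F = F(\Pn\cup\{x\}) - F(\Pn)$ and $D^2_{x,y}F$, whose integrated moments reduce to the same combinatorial and volume bounds as the variance and, after the substitution $x_\alpha = x_1 + s_n z_\alpha$, are seen to be $O(1)$, while the fourth-order contributions are killed by the $n^{-1/2}$ scaling. (Alternatively, a dependency-graph version of Stein's method applies, since components separated by more than $2 i_0 s_n t_m$ are conditionally independent.)

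Finally I would identify the limiting covariance, which simultaneously defines $\etakrd$ and $\nukrd$. By the multivariate Mecke formula, $\Cov\bigl(U_{i_1,j_1,n}(t_1), U_{i_2,j_2,n}(t_2)\bigr)$ decomposes according to the overlap $\ell = |\Y_1 \cap \Y_2|$ of the two index sets. Since a subset forms a connected component only if it is separated from the rest of the complex, overlapping index sets ($\ell \ge 1$) can both be components only if they coincide as point sets, forcing $i_1 = i_2$; this yields the diagonal $\etakrd$ term with the Kronecker factor $\delta_{i_1,i_2}$ and normalizer $1/i_1!$, while the disjoint case ($\ell = 0$) contributes through overlapping exclusion regions and produces the $\nukrd$ term with $1/(i_1!\,i_2!)$. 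In both cases I would rescale by $s_n$, use $ns_n^d = 1$ and the continuity of $f$, and pass to the limit by dominated convergence, the empty-region probabilities $\exp(-n\int_{\cdot} f)$ converging to the exponential weights in the final expressions. The \emph{main obstacle} is the interplay between this exclusion constraint and the truncation: because $\grntij(\Y,\Pn)$ refers to all of $\Pn\setminus\Y$, the counts are not genuine $U$-statistics, and one must both exploit the locality of the exclusion to obtain moment bounds sharp enough to be summable in $i$—exactly where the improvement over \cite{kmer} lies—and show that these exclusion events factorize in the scaling limit so that the covariance converges to the stated form. The remaining pieces, the Stein-type CLT and the calculus of the covariance integrals, are technical but routine once these bounds are in hand.
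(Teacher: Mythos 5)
Your proposal is correct and follows essentially the same route as the paper: Cram\'er--Wold reduction, truncation of the component-size sum with the tail variance killed by the spanning-tree/volume bound whose geometric base $e\lVert f\rVert_{\infty}\theta_d t^d<1$ under the stated restriction on $t$, a Stein-type normal approximation for the truncated statistic exploiting locality at scale $s_n$ (the paper implements the dependency-graph variant you mention parenthetically, via a partition into cubes of side $r_n(t)$ and Penrose's Theorem 2.4, rather than the add-one-cost Malliavin--Stein bounds), and a Palm/Mecke computation of the covariance split into the coinciding-component diagonal term $\etakrd$ and the disjoint term $\nukrd$. The obstacle you single out---factorizing the exclusion constraint in the disjoint case---is resolved in the paper exactly as you anticipate, by comparing against an independent copy of the Poisson process and using spatial independence.
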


\begin{remark}
Although Theorem \ref{T:cclt} imposes a restriction on the range of $t_i$'s, we conjecture that the limit theorem holds without such restrictions. Indeed in the case of  
the ``truncated" Betti number 
$$
\beta_{k,n}^{(M)}(t) = \sum_{i=k+2}^M \sum_{j >0} j U_{i,j,n}(t), \ \ M \in \bbn, 
$$
which itself is useful for the approximation arguments in our proof, the central limit theorem does hold for every $t>0$.  
\end{remark}
Before concluding this section we shall exploit Theorem 4.6 in \cite{yogesh17} and present the strong law of large numbers of $\beta_{k,n}(t)$. The proof is given at the end of Section \ref{s:proof.critical}. 
\begin{corollary}  \label{c:SLLN}
Under the condition of Theorem \ref{T:cclt}, we assume moreover that $f$ has a compact, convex support such that $\inf_{x \in \text{supp} (f)}f(x)>0$. Then 
we have, as $n\to\infty$, 
$$
\frac{\beta_{k,n}(t)}{n} \to \sum_{i=k+2}^\infty \sum_{j>0} \frac{j}{i!}\, \etakrd^{(i,j,j)}(t,t), \ \text{a.s.}
$$
\end{corollary}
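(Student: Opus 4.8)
The plan is to combine a strong law for the \emph{static} Betti number with an identification of the resulting almost-sure limit as the limiting normalized mean. Observe first that for each fixed $t$ the value $\beta_{k,n}(t) = \beta_k(\C(\Pn, n^{-1/d}t))$ is simply a static Betti number evaluated at the critical radius $n^{-1/d}t$, so the time parameter plays no role once $t$ is frozen. The observation driving the identification is that the quantity $\frac{1}{i!}\etakrd^{(i,j,j)}(t,t)$ appearing in the statement is exactly the limiting normalized mean of the component count, i.e.
\[
n^{-1}\E[U_{i,j,n}(t)] \to \frac{1}{i!}\etakrd^{(i,j,j)}(t,t), \qquad n \to \infty,
\]
as established during the proof of Theorem \ref{T:cclt} (this is precisely how $\etakrd^{(i,j,j)}$ is defined; see \eqref{e:eta.k}). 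Consequently the claimed almost-sure limit coincides with $\lim_n n^{-1}\E[\beta_{k,n}(t)]$, and the proof reduces to (i) establishing almost-sure convergence of $\beta_{k,n}(t)/n$ to some deterministic constant, (ii) identifying that constant with the limiting mean, and (iii) evaluating the limiting mean through the decomposition \eqref{e:Betti1}.

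For step (i) I would invoke Theorem 4.6 of \cite{yogesh17}. Under the additional hypotheses imposed in the corollary---$f$ having compact convex support with $\inf_{x \in \mathrm{supp}(f)} f(x) > 0$---the geometry is regular enough for the concentration machinery underlying that theorem (a bounded-difference argument on the Poisson process restricted to a growing window) to apply, and the radius $n^{-1/d}t$ sits in the thermodynamic regime treated there. A minor point is to reconcile their normalization (a stationary unit-intensity process on a window $B_n$ of volume $n$) with the intensity $nf$ used here; this is handled by the standard change of scale $x \mapsto n^{1/d}x$, which converts our complex into a unit-intensity complex with a slowly varying local density, together with a localization of the support into small cubes on which $f$ is nearly constant. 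The upshot is $\beta_{k,n}(t)/n \to c(t)$ almost surely for some constant $c(t)$.

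For step (ii), I would upgrade the convergence to $L^1$ in order to pin down $c(t)$ (this identification may already be contained in Theorem 4.6 of \cite{yogesh17}, which typically states the limit as $\lim_n n^{-1}\E[\beta_{k,n}(t)]$; the argument below serves as confirmation). Proposition \ref{P:covcritical} gives $\E[\beta_{k,n}(t)] = O(n)$ and $\Var(\beta_{k,n}(t)) = O(n)$, hence $\sup_n \E[(\beta_{k,n}(t)/n)^2] < \infty$; the sequence $\beta_{k,n}(t)/n$ is therefore uniformly integrable, so the almost-sure limit equals the $L^1$ limit, giving $c(t) = \lim_n n^{-1}\E[\beta_{k,n}(t)]$. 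Step (iii) then expands this mean via \eqref{e:Betti1},
\[
n^{-1}\E[\beta_{k,n}(t)] = \sum_{i \geq k+2}\sum_{j>0} j\, n^{-1}\E[U_{i,j,n}(t)],
\]
and I would pass the limit inside the double sum by dominated convergence. The termwise limit is $\frac{1}{i!}\etakrd^{(i,j,j)}(t,t)$ by the mean computation above, while a uniform (in $n$) dominating bound of the form
\[
n^{-1}\E[U_{i,j,n}(t)] \leq \frac{i^{\,i-2}\,(\lVert f \rVert_\infty \theta_d t^d)^{i}}{i!}
\]
follows from the multivariate Mecke formula applied to $\grntij$ together with a spanning-tree and ball-covering count for connected components on $i$ points. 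Since $j$ ranges over only finitely many values for each $i$, Stirling's formula shows the majorant series $\sum_i i^{\,i-2}(\lVert f \rVert_\infty \theta_d t^d)^{i}/i!$ decays geometrically precisely when $e\lVert f \rVert_\infty \theta_d t^d < 1$, i.e.\ $t < (e\lVert f \rVert_\infty \theta_d)^{-1/d}$; this is exactly the restriction inherited from Theorem \ref{T:cclt}, and it is what legitimizes the interchange and the convergence of the limiting series.

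The main obstacle I anticipate is step (iii): controlling the tail of the infinite sum over component sizes uniformly in $n$ so that dominated convergence applies and the limiting double series converges. This is where the hypothesis $t < (e\lVert f \rVert_\infty \theta_d)^{-1/d}$ is indispensable, since it forces the exponential-in-$i$ factor to dominate the factorial growth of the number of connected component shapes on $i$ points. A secondary, more technical difficulty is the bookkeeping needed to transfer Theorem 4.6 of \cite{yogesh17} from their stationary setting on $B_n$ to the inhomogeneous intensity $nf$; but given the convexity and positivity assumptions on $\mathrm{supp}(f)$ this is a routine rescaling and localization rather than a genuinely new argument.
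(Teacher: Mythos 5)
Your proposal is correct and follows essentially the same route as the paper: cite Theorem 4.6 of \cite{yogesh17} for the almost-sure statement and combine it with the mean asymptotics of Proposition \ref{P:covcritical}~$(ii)$ (your step (iii) is a re-derivation of that proposition's expectation computation, including the same spanning-tree bound and the same role for $t < (e\lVert f\rVert_\infty\theta_d)^{-1/d}$). The only difference is packaging: the paper quotes Theorem 4.6 directly in the centered form $n^{-1}(\beta_{k,n}(t)-\E[\beta_{k,n}(t)])\to 0$ a.s., which makes your uniform-integrability step (ii) for identifying the limiting constant unnecessary.
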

\section{Poisson regime}  \label{s:Poisson}
Before concluding this paper we shall explore the random topology of \v{C}ech complexes when the complex is even more sparse than that in Section \ref{s:subcritical}, so that $k$-cycles hardly ever occur. Then, the $k$th Betti number no longer follows a central limit theorem. Nevertheless, it does obey a Poisson limit theorem.  
In terms of the connectivity radii, we assume $\rho_n = n^{k+2}s_n^{d(k+1)} = 1$, equivalently, $s_n = n^{-(k+2)/d(k+1)}$, so that $s_n$ converges to $0$ more rapidly than in the sparse regime. 

For the  definition of a ``Poissonian" type limiting process, we let $M_k$ be a Poisson random measure with mean measure $C_{f,k} m_k$. Namely it is defined by 
\[ M_k(A) \sim \Poi(C_{f,k} m_{k}(A))\]
for all measurable $A$ in $\R^{d(k+1)}$. Further, if $A_1,\dots,A_m$ are disjoint, $M_k(A_1), \dots, M_k(A_m)$ are independent. We are now ready to define the stochastic process
\[
\mathcal{V}_k(t) = \int_{\R^{d(k+1)}} h_t(0, \mathbf{y}) \, M_k(\dif{\mathbf{y}}), 
\]
which appears below as a weak limit in the main theorem. What is interesting about this is that if we define
\[
\mathcal{V}_k^{\pm}(t) := \int_{\R^{d(k+1)}} h_t^{\pm}(0, \mathbf{y}) \, M_k(\dif{\mathbf{y}}),
\]
then $\mathcal{V}_k(t) = \mathcal{V}_k^+(t) - \mathcal{V}_k^-(t)$ is the difference of two dependent (time-changed) Poisson processes on $\R_+$. Interestingly, this treatment is analogous to the statement of the Gaussian process limit in Section \ref{s:subcritical}, and those wishing a deeper exploration of this in a similar setting should refer to \cite{owaSPB}. What is precisely meant by this can be seen in the following proposition.

\begin{proposition}\label{P:poiproc}
The process $\mathcal{V}_k^{\pm}$ can be expressed as 
\[
(\mathcal{V}^{\pm}_k(t), t \geq 0) \overset{d}{=} \Big(N_k^{\pm}(t^{d(k+1)}), t \geq 0\Big),
\]
where $N_k^{\pm}$ is a (homogeneous) Poisson process with intensity $C_{f,k}m_k(D_1^{\pm})$ with $D_t^\pm = \{ \by \in \bbr^{d(k+1)}: h_t^\pm (0,\by) = 1 \}$. 
\end{proposition}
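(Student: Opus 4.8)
The plan is to reduce $\mathcal{V}_k^{\pm}(t)$ to a plain Poisson count of a nested family of sets, exploit the scale-invariance of the \v{C}ech intersection conditions to obtain the polynomial time change, and then read off the independent-increment structure directly from the defining properties of the Poisson random measure $M_k$. As in Proposition \ref{P:gauproc}, I would prove only the $+$ case, the $-$ case being identical. First, since $h_t^{+}(0,\by) = \one\{\by \in D_t^+\}$ is itself an indicator, the stochastic integral collapses to $\mathcal{V}_k^+(t) = M_k(D_t^+)$, so that for each fixed $t$ the variable $\mathcal{V}_k^+(t)$ is Poisson distributed with mean $C_{f,k}\, m_k(D_t^+)$.

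Second, I would establish the scaling identity $m_k(D_t^+) = t^{d(k+1)} m_k(D_1^+)$. The key point is that each event $\bigcap_{j \neq i} B(x_j; t/2) \neq \emptyset$ is invariant under simultaneously dilating the configuration and the radius: substituting $x_j = t u_j$ shows $h_t^+(0,\by) = h_1^+(0, \by/t)$, whence $D_t^+ = t\, D_1^+$ (coordinatewise dilation by $t$ of the Borel set $D_1^+ \subset \bbr^{d(k+1)}$). The change of variables $\by \mapsto \by/t$ then gives $m_k(D_t^+) = t^{d(k+1)} m_k(D_1^+)$, which is exactly the scaling implicit in the covariance computation of Proposition \ref{P:gauproc}.

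Third, I would verify that $(\mathcal{V}_k^+(t))_{t \ge 0}$ has the finite-dimensional distributions and the independent-increment structure of the asserted time-changed Poisson process. Because $h_t^+$ is non-decreasing in $t$, the sets $D_t^+$ are nested, so for $0 \le t_0 < t_1 < \dots < t_m$ the set differences $D_{t_\ell}^+ \setminus D_{t_{\ell-1}}^+$ are pairwise disjoint; the defining independence over disjoint sets and the Poisson marginals of $M_k$ then make the increments $\mathcal{V}_k^+(t_\ell) - \mathcal{V}_k^+(t_{\ell-1}) = M_k(D_{t_\ell}^+ \setminus D_{t_{\ell-1}}^+)$ independent and Poisson with means $C_{f,k}\big(m_k(D_{t_\ell}^+) - m_k(D_{t_{\ell-1}}^+)\big) = C_{f,k}\, m_k(D_1^+)\big(t_\ell^{d(k+1)} - t_{\ell-1}^{d(k+1)}\big)$. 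These coincide with the increments of $N_k^+(t^{d(k+1)})$ for a homogeneous Poisson process $N_k^+$ of rate $C_{f,k}\, m_k(D_1^+)$; since both sides have independent increments and agree on all finite-dimensional distributions, they agree in law as c\`adl\`ag processes.

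The Poisson bookkeeping in the third step is routine, so the only genuine content is the scaling identity of the second step, and even that is a direct dilation argument inherited from the geometry of the \v{C}ech complex. Consequently I expect no serious obstacle; the one thing to get right is the careful substitution showing $D_t^+ = t\, D_1^+$, together with the observation that the nestedness of $(D_t^+)$ is precisely what converts the independence of $M_k$ on disjoint sets into independence of the increments in $t$.
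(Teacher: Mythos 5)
Your proposal is correct and follows essentially the same route as the paper's proof: both use the monotonicity of $h_t^+$ to get the nested sets $D_{t}^+$, the independence of $M_k$ on the disjoint differences $D_{t_\ell}^+\setminus D_{t_{\ell-1}}^+$ to obtain independent Poisson increments, and a dilation/change of variables to get $m_k(D_t^+)=t^{d(k+1)}m_k(D_1^+)$. The only cosmetic difference is that the paper packages the increment computation as a joint Laplace transform while you argue directly on the increments, and you spell out the scaling identity $D_t^+=t\,D_1^+$ that the paper leaves as ``by a change of variable.''
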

\begin{proof}
As with Proposition~\ref{P:gauproc}, we prove only the result for $\mathcal{V}^{+}_k$, as the proof for $\mathcal{V}^{-}_k$ is the same. We can see that if $0=t_0 < t_1 < \dots < t_k <\infty$ and $\lambda_i >0$, $i=1,\dots,k$, then by the non-decreasingness of $h_t^+$, 
\begin{align*}
\E\Bigl [ \exp\Bigl( -\sum_{i=1}^k \lambda_i \bigl(\mathcal{V}_k^+(t_i) - \mathcal{V}_k^+(t_{i-1})\bigl)\Bigr)\Big] = \E\Bigl [ \exp\Bigl( -\sum_{i=1}^k \lambda_i M_k (D_{t_i}^+ \setminus D_{t_{i-1}}^+) \Big)\Big],
\end{align*}
where $D_{t_i}^+ \setminus D_{t_{i-1}}^+$ are disjoint and $M_k (D_{t_i}^+ \setminus D_{t_{i-1}}^+)$, $i=1,\dots,k$, are independent. Moreover, $M_k (D_{t_i}^+ \setminus D_{t_{i-1}}^+)$ is Poisson distributed with parameter 
$$
C_{f,k} m_k (D_{t_i}^+ \setminus D_{t_{i-1}}^+) = C_{f,k} m_k (D_1^+) (t_i^{d(k+1)} - t_{i-1}^{d(k+1)})
$$
by a change of variable. Hence we have that
$$
\E\Bigl [ \exp\Bigl( -\sum_{i=1}^k \lambda_i M_k (D_{t_i}^+ \setminus D_{t_{i-1}}^+) \Big)\Big] = \prod_{i=1}^k \exp \Big( C_{f,k} m_k (D_1^+) (t_i^{d(k+1)} - t_{i-1}^{d(k+1)}) (e^{-\lambda_i} - 1)  \Big), 
$$
which implies that the process $\V_k^+(t^{1/d(k+1)})$ has independent increments and \[
\V_k^+((t+s)^{1/d(k+1)}) - \V_k^+(s^{1/d(k+1)})
\]
is Poisson with parameter $C_{f,k} m_k (D_1^+)t$. 
\end{proof}

In what follows we assume $\rho_n = 1$, 
though we could easily modify this to suppose that $\rho_n \to 1$ as $n \to \infty$. For simplicity in our proofs we assert the former. The proof is again given in Section \ref{s:proofs} and the main techniques there are those in \cite{dfsch16}. 
\begin{theorem}\label{T:Poisson}
Suppose that $\rho_n = 1$ and $f$ is an almost everywhere bounded and continuous density function. Then, we have the following weak convergence in the finite dimensional sense, namely
\[
\beta_{k,n}(t) \overset{fidi}{\Rightarrow} \mathcal{V}_k(t),
\]
meaning that for every $m \in \mathbb{N}$ and $0 < t_1 < t_2 < \dots < t_m < \infty$ we have 
\begin{equation}  \label{e:Poisson.beta}
\bigl( \beta_{k,n}(t_i), \, i=1,\dots,m \bigr) \Rightarrow \bigl(\mathcal{V}_k(t_i), \,  i=1,\dots,m\bigr),
\end{equation} 
weakly in $\R^m$. 
\end{theorem}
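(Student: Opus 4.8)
The plan is to reduce $\beta_{k,n}(t)$ to the number of empty $(k+1)$-simplices and then to obtain the joint convergence over $t_1,\dots,t_m$ from a single point-process limit. Writing $\beta_{k,n}(t) = S_{k,n}(t) + R_{k,n}(t)$ as in \eqref{e:Betti2}, I would first argue that only $S_{k,n}$ survives: by the Mecke equation and the usual spanning-tree bound (cf. \cite{penr, km}), a connected component on $i > k+2$ points confined to diameter $O(s_n)$ has expected count of order $n^{i}s_n^{d(i-1)}$, whose exponent in $n$ equals $\bigl((k+2)-i\bigr)/(k+1) < 0$, so $\E[R_{k,n}(t)] \to 0$ (controlled by its leading $i=k+3$ term). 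It is convenient to also introduce the unrestricted count $\widetilde S_{k,n}(t) := \sum_{\Y\subset\Pn,\,|\Y|=k+2} h_{r_n(t)}(\Y)$, which drops the ``isolated component'' requirement from $S_{k,n}$. The difference $\widetilde S_{k,n} - S_{k,n}$ counts empty simplices having an extra neighboring point, whose expected number is of order $n^{k+3}s_n^{d(k+2)} = \rho_n\cdot n s_n^d = n s_n^d = n^{-1/(k+1)} \to 0$. By Markov's inequality these differences tend to $0$ in probability, simultaneously at the finitely many $t_i$, so by Slutsky it suffices to prove $\bigl(\widetilde S_{k,n}(t_i)\bigr)_i \Rightarrow \bigl(\mathcal{V}_k(t_i)\bigr)_i$.

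Next I would exploit the scale invariance $h_{r_n(t)}\bigl(x, x+s_n y_1,\dots,x+s_n y_{k+1}\bigr) = h_t(0,\by)$, which says that the $t$-dependence of an empty simplex is captured entirely by its rescaled shape $\by = \by(\Y)$ (positions of $\Y$ relative to $\mathrm{LMP}(\Y)$, divided by $s_n$). Thus all times are read off from one point process: fixing a bounded window $W \subset \bbr^{d(k+1)}$ containing every support $\{\by: h_{t_i}(0,\by)=1\}$, set
\[
\xi_n := \sum_{\Y\subset\Pn,\ |\Y|=k+2} \one\{\by(\Y)\in W\}\, \delta_{\by(\Y)},
\]
so that $\widetilde S_{k,n}(t_i) = \int_W h_{t_i}(0,\by)\,\xi_n(\dif\by)$ for every $i$ at once. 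By the Mecke equation, the change of variables $x_j = x + s_n y_{j-1}$, and continuity of $f$, the intensity satisfies $\E[\xi_n(\,\cdot\,)] \to C_{f,k}\,m_k(\,\cdot\,)$ on $W$ (here $n^{k+2}s_n^{d(k+1)} = \rho_n = 1$ produces exactly the constant $C_{f,k}$), which is precisely the mean measure of $M_k$ restricted to $W$.

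The core step is the functional Poisson approximation of \cite{dfsch16}: I would bound the Kantorovich--Rubinstein distance between $\xi_n$ and the Poisson process with intensity $\E[\xi_n]$. Since $\E[\xi_n]\to C_{f,k}m_k$, this gives $\xi_n \Rightarrow M_k$ on $W$ once the error terms vanish. The dominant error is a clustering sum over pairs of $(k+2)$-subsets sharing $\ell\in\{1,\dots,k+1\}$ points: being tied together, such a pair spans $2(k+2)-\ell$ points inside a region of diameter $O(s_n)$, so its expected number is of order $n^{2(k+2)-\ell}s_n^{d(2(k+2)-\ell-1)}$, with exponent in $n$ equal to $\bigl(\ell-(k+2)\bigr)/(k+1) < 0$ for all admissible $\ell$. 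This is exactly where the Poisson scaling $\rho_n=1$ is indispensable. I expect this clustering estimate --- together with checking the remaining hypotheses of \cite{dfsch16} in the present marked setting --- to be the main obstacle.

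Finally I would conclude by continuous mapping. The vector functional $\mu \mapsto \bigl(\int_W h_{t_i}(0,\by)\,\mu(\dif\by)\bigr)_{i=1}^m$ is continuous at $\mu = M_k$ almost surely, because each $\{\by: h_{t_i}(0,\by)=1\}$ has Lebesgue-null topological boundary (the ball-intersection conditions degenerate only on a lower-dimensional set) while $M_k$, having the absolutely continuous intensity $C_{f,k}m_k$, almost surely charges no null set; note also that $h_t = h_t^+ - h_t^-$ keeps each coordinate a nonnegative integer. Hence $\bigl(\widetilde S_{k,n}(t_i)\bigr)_i \Rightarrow \bigl(\mathcal{V}_k(t_i)\bigr)_i$, and combining with the reduction of the first paragraph yields $\bigl(\beta_{k,n}(t_i)\bigr)_i \Rightarrow \bigl(\mathcal{V}_k(t_i)\bigr)_i$, which is \eqref{e:Poisson.beta}.
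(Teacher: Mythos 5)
Your proposal is correct and follows essentially the same route as the paper: the same reduction from $\beta_{k,n}$ to $S_{k,n}$ and then to the unrestricted simplex count (your $\widetilde S_{k,n}$ is exactly the paper's $H_{k,n}$), the same mean-measure and clustering estimates with the same exponents, and the same appeal to Theorem 3.1 of \cite{dfsch16}. The only difference is cosmetic: the paper marks each $(k{+}2)$-subset by the scalar $\sum_i a_i h_{r_n(t_i)}(\Y)$, obtaining a point process on $\R_+$ whose limit is identified via Laplace transforms of positive linear combinations, whereas you keep the rescaled shape $\by(\Y)$ as the mark and read off all $t_i$ simultaneously by continuous mapping of an a.s.\ continuous functional; both are valid.
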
 
\section{Proofs}  \label{s:proofs}

In this section we prove the theorems seen in the sections above, with the exposition focused on the critical and Poisson regimes. We only briefly discuss the sparse regime, since the proof is considerably similar to (or even easier than) the critical regime case. 

In the sequel,  we write $x + \by = (x + y_1, \dots, x+y_m)$ for $x\in \bbr^d$ and $\by = (y_1,\dots,y_m) \in \bbr^{dm}$. 

\subsection{Critical regime}  \label{s:proof.critical}
The first step towards the required central limit theorem is to examine the asymptotic moments as follows. 
Before proceeding with the proof, let us define the ``truncated'' Betti numbers 
\begin{equation}  \label{e:truncated.Betti}
\beta_{k,n,A}^{(M)}(t) := \sum_{i=k+2}^M \sum_{j>0} j U_{i,j,n,A}(t), \ \ M \in \bbn \cup \{ \infty \}
\end{equation}
for any measurable $A \subset \bbr^d$. 
Clearly $\beta_{k,n,A}(t) = \beta_{k,n,A}^{(\infty)}(t)$. 

Let us introduce a few items useful for specifying the limiting covariances. In the following $i, i_1, i_2, j_1$, and $j_2$ are positive integers, $t_1, t_2$ are non-negative reals, $A$ is an open subset of $\bbr^d$ with $m(\partial A) = 0$, and $a \wedge b := \min \{ a,b \}$ with $a\vee b := \max \{ a,b \}$. Additionally, we define the two functions
\begin{align}
\eta_{k,A}^{(i,j_1,j_2)} (t_1,t_2) &:= \int_{\bbr^{d(i-1)}} \int_{\bbr^d}  \one \bigl\{ \check{C} \big( \{ 0,\by \}, t_1 \wedge t_2 \big) \text{ is connected} \bigr\} \prod_{\ell=1}^2 b_{j_\ell, t_\ell}(0,\by) \label{e:eta.k}\\
&\qquad \times \exp \Big( -(t_1 \vee t_2)^d f(x) m \big( \B (\{ 0,\by \}; 1) \big) \Big)f(x)^i \one_A (x)\dif x \dif \by, \notag
\end{align}
and
\begin{align}
\nu_{k,A}^{(i_1,i_2,j_1,j_2)}(t_1,t_2) &:= \int_{\bbr^d}\dif x \int_{\bbr^{d(i_1-1)}} \dif \by_1 \int_{\bbr^{di_2}} \dif \by_2 \, \one \bigl\{ \check{C} \big( \{ 0,\by_1 \}, t_1 \big) \text{ is connected} \bigr\}\, \label{e:nu.k} \\
&\times \one \bigl\{ \check{C} \big(  \by_2 , t_2 \big) \text{ is connected} \bigr\} b_{j_1,t_1}(0,\by_1)\, b_{j_2, t_2}(\by_2) \notag \\
&\times \Big[ \Big( \alpha_{t_1,t_2} \big( \{ 0,\by_1 \}, \by_2 \big) - \alpha_{(t_1 \vee t_2) / 2} \big(  \{ 0,\by_1 \}, \by_2  \big) \Big)e^{-f(x) m \big( \B ( \{ 0,\by_1 \}; t_1 ) \cup \B(\by_2; t_2) \big)} \notag \\
&- \alpha_{t_1,t_2} \big( \{ 0,\by_1 \}, \by_2 \big) e^{-f(x) \big\{ m( \B(\{0,\by_1 \}; t_1) ) + m( \B(\by_2; t_2) ) \big\}}\Big] f(x)^{i_1 + i_2} \one_A(x), \notag 
\end{align}
where 
\begin{equation}  \label{e:union.balls}
\B(\mathcal X; r) := \bigcup_{y \in \mathcal X} B(y; r)
\end{equation} 
for a collection $\mathcal X$ of $\bbr^d$-valued vectors and $r>0$. Moreover, 
$$
\alpha_{r,s} (\Xio, \Xit):= \one \big\{ \B (\Xio; r)\cap \B(\Xit; s) \neq \emptyset \big\},
$$
and $\alpha_r(\Xio, \Xit) := \alpha_{r,r}(\Xio, \Xit)$. Finally we define for $M \in \bbn \cup \{ \infty \}$, 
$$
\Phi_{k,A}^{(M)} (t_1,t_2) := \sum_{i_1=k+2}^M \sum_{i_2=k+2}^M \sum_{j_1 > 0} \sum_{j_2 > 0} j_1 j_2\, \bigg( \frac{\etakrd^{(i_1, j_1, j_2)}(t_1, t_2) \delta_{i_1, i_2}}{i_1!}+ \frac{\nukrd^{(i_1, i_2, j_1, j_2)}(t_1, t_2)}{i_1! i_2!} \bigg)
$$ 
where $\delta_{i_1, i_2}$ is again the Kronecker delta and we define $\Phi_{k,A} (t_1,t_2) := \Phi_{k,A}^{(\infty)}(t_1,t_2)$. 
\begin{proposition}\label{P:covcritical}
Let f be an almost everywhere bounded and continuous density function. Let $ns_n^d = 1$ and $A \subset \mathbb{R}^d$ is open with $m(\partial A) = 0$. \\
$(i)$ If $M < \infty$, then for $t, t_1,t_2>0$, 
\[
n^{-1}\E[\beta_{k,n,A}^{(M)}(t)] \to \sum_{i=k+2}^M \sum_{j>0} \frac{j}{i!}\, \eta^{(i,j,j)}_{k,A}(t, t), \quad n \to \infty
\]
\[
n^{-1}\Cov(\beta_{k,n,A}^{(M)}(t_1), \beta_{k,n,A}^{(M)}(t_2)) \to \Phi_{k, A}^{(M)}(t_1, t_2), \quad n \to\infty. 
\]
$(ii)$ If $M = \infty$, then for $0 < t, t_1, t_2 < \big( e \| f \|_\infty \theta_d \big)^{-1/d}$, 
\[
n^{-1}\E[\beta_{k,n,A}(t)] \to \sum_{i=k+2}^{\infty} \sum_{j>0}  \frac{j}{i!}\, \eta^{(i,j,j)}_{k,A}(t, t), \quad n \to \infty
\]
\[
n^{-1}\Cov(\beta_{k,n,A}(t_1), \beta_{k,n,A}(t_2)) \to \Phi_{k, A}(t_1, t_2), \quad n \to\infty
\]
so that the limits above are finite non-zero constants. 
\end{proposition}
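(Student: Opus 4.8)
**

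The plan is to compute the asymptotic first and second moments of the truncated Betti numbers $\beta_{k,n,A}^{(M)}(t)$ directly, exploiting the Poisson structure of $\Pn$ via the Mecke formula (multivariate Georgii--Nguyen--Zessin / Mecke equation). Since $\beta_{k,n,A}^{(M)}(t) = \sum_{i=k+2}^M \sum_{j>0} j\, U_{i,j,n,A}(t)$ and $U_{i,j,n,A}(t) = \sum_{\Y \subset \Pn} g_{r_n(t),A}^{(i,j)}(\Y, \Pn)$, the factorial-moment machinery for Poisson functionals reduces everything to integrals of the indicators $g^{(i,j)}$ against the product intensity $n^i f^{\otimes i}$.

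\medskip

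\noindent\textbf{First moment.} First I would apply the Mecke formula to $\E[U_{i,j,n,A}(t)]$. This yields
\[
\E[U_{i,j,n,A}(t)] = \frac{n^i}{i!} \int_{(\bbr^d)^i} \E\big[ g_{r_n(t),A}^{(i,j)}(\bx, \bx \cup \Pn) \big]\, f^{\otimes i}(\bx)\, \dif \bx,
\]
where the expectation over the extra points $\Pn$ accounts for the event that $\check C(\bx, r_n(t))$ is an isolated connected component, i.e.\ that no point of $\Pn$ lies within distance $r_n(t)$ of $\bx$. That isolation probability is precisely $\exp\big(-n \int_{\B(\bx; r_n(t))} f(y)\dif y\big)$. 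I would then change variables by writing $\bx = (x, x+s_n\by)$ (pulling out the left-most point $x$ and rescaling the remaining $i-1$ coordinates by $s_n$), use $ns_n^d = 1$, and invoke continuity and boundedness of $f$ together with the dominated convergence theorem: the rescaled connectivity indicator and $b_{j,\cdot}$ converge to their $t$-fixed counterparts, the isolation exponent $n\int_{\B} f \to t^d f(x) m(\B(\{0,\by\};1))$, and the factor $f(x+s_n y_\ell)^i \to f(x)^i$. This produces exactly $\frac{1}{i!}\eta^{(i,j,j)}_{k,A}(t,t)$ after summing over $i,j$, matching \eqref{e:eta.k}.

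\medskip

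\noindent\textbf{Covariance.} For the covariance I would expand $\Cov(\beta_{k,n,A}^{(M)}(t_1), \beta_{k,n,A}^{(M)}(t_2))$ into the covariances $\Cov(U_{i_1,j_1,n,A}(t_1), U_{i_2,j_2,n,A}(t_2))$ and apply the second-order Mecke formula. This naturally splits into a \emph{diagonal} part, where the two subsets $\Y_1, \Y_2$ coincide (possible only when $i_1=i_2$, giving the Kronecker $\delta_{i_1,i_2}$ and the $\eta$ term), and an \emph{off-diagonal} part, where $\Y_1, \Y_2$ are disjoint. In the off-diagonal part the key cancellation is between the joint expectation $\E[g_1 g_2]$ and the product $\E[g_1]\E[g_2]$: both involve isolation of the two clusters, but the joint term excludes points that would connect \emph{either} cluster (giving the union exponent $m(\B_1 \cup \B_2)$) while the product gives the sum $m(\B_1)+m(\B_2)$, and the difference of the two $\alpha$-indicators records whether the clusters are far enough apart to be independently placed. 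After the same $(x, s_n\by_1, s_n\by_2)$ rescaling, this yields the $\nu$ term in \eqref{e:nu.k}, and summation reproduces $\Phi_{k,A}^{(M)}(t_1,t_2)$.

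\medskip

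\noindent\textbf{The case $M=\infty$ and the main obstacle.} For part $(i)$ with $M<\infty$ the sums are finite, so dominated convergence suffices and there is no integrability subtlety. The genuine difficulty is part $(ii)$: interchanging the limit $n\to\infty$ with the infinite sum over $i$ and establishing that the limiting series converges to a finite nonzero constant. This is exactly where the restriction $t < (e\|f\|_\infty \theta_d)^{-1/d}$ enters. I expect the hard part to be a uniform tail bound of the form $\sum_{i>M} \E[U_{i,j,n,A}(t)]/n \leq \varepsilon$ uniformly in large $n$, controlling clusters on many points. The mechanism is that a connected component on $i$ points forces a spanning-tree structure, contributing a factor roughly $(C\, t^d \|f\|_\infty \theta_d)^{i-1} i^{i-2}/i!$ from the number of trees and the volume of the connected configuration space, against which the isolation exponent $e^{-c}$ provides decay; Stirling's formula $i^{i-2}/i! \sim e^i/(i^2\sqrt{2\pi i})$ then shows the series converges precisely when $e\, t^d \|f\|_\infty \theta_d < 1$, i.e.\ under the stated range of $t$. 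I would establish this tail estimate uniformly in $n$, then use it to pass from the finite-$M$ result of part $(i)$ to $M=\infty$ by a standard $\varepsilon/3$ approximation argument, and finally verify the limit is nonzero by noting that the leading $i=k+2$ term $\frac{1}{(k+2)!}\eta^{(k+2,1,1)}_{k,A}(t,t)$ is strictly positive.
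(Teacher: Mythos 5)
Your proposal is correct and follows essentially the same route as the paper: Palm/Mecke calculus for the first and second factorial moments, the rescaling $\bx=(x,x+s_n\by)$ with dominated convergence to produce $\eta$ and $\nu$, the diagonal/disjoint split of the covariance (the partially-overlapping case vanishing because a proper overlap destroys isolation), and the spanning-tree count $i^{i-2}(r_n(t)^d\|f\|_\infty\theta_d)^{i-1}$ with the ratio test to justify the interchange of limit and sum under $t<(e\|f\|_\infty\theta_d)^{-1/d}$. The only cosmetic difference is that the paper obtains the needed decay purely from the spanning-tree/factorial ratio test (bounding the isolation exponential by one) rather than from the exponential itself, but your stated mechanism via Stirling is the one actually used.
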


\begin{proof} 
We only establish the statements in $(ii)$. 
We aim to demonstrate the convergence of the expectation in Part 1 and then in Part 2, the convergence of the covariance to $\Phi_{k,A}(t_1, t_2)$. For ease of description we treat only the case when $A = \R^d$. The argument for a general $A$ will be the same except obvious minor changes. 
\vspace{5pt}

\noindent \emph{\underline{Part 1}}: The definition in \eqref{e:Betti1}, the Palm theory for Poisson processes in \cite{penr}, and the monotone convergence theorem supply that
\begin{equation}  \label{e:expectation.critical}
n^{-1}\E[\beta_{k,n}(t)] = \sum_{i=k+2}^{\infty} \sum_{j>0} j\, \frac{n^{i-1}}{i!}\, \E[\grntij(\X i, \X i \cup \Pn)], 
\end{equation}
where $\X i = (X_1,\dots,X_i) \in \bbr^{di}$ is a collection of i.i.d random points in $\bbr^d$ with common density $f$. 
By conditioning on $\X i$ we have that
\begin{align}
&n^{i-1}\E[\grntij(\X i, \X i \cup \Pn)]   \label{e:critical.exp} \\
&= n^{i-1} \E \Big[  b_{j,r_n(t)}(\X i) \E \big[ J_{i,r_n(t)}(\X i, \X i \cup \Pn)\, \big|\, \X i  \big] \Big] \notag \\
&= n^{i-1} \int_{\bbr^{di}} \one \big\{ \check{C} (\bx, r_n(t)) \text{ is connected}  \big\}\, b_{j,r_n(t)} (\bx) \exp \big(  -nI_{r_n(t)} (\bx)\big) \prod_{j=1}^i f(x_j) \dif \bx, \notag
\end{align}
where 
$$
I_{r_n(t)}(\bx)  =  I_{r_n(t)}(x_1,\dots,x_i) = \int_{\B (\bx; r_n(t))} f(z) \dif{z}.
$$
Subsequently we perform the change of variables $x_1 = x$ and $x_j = x + s_ny_{j-1}$ for $j=2,\dots, i$, to get that \eqref{e:critical.exp} is equal to 
\begin{align*}
&(ns_n^d)^{i-1} \int_{\bbr^{d(i-1)}}\int_{\bbr^{d}} \one \big\{  \check{C} (\{ x,x+s_n\by \}, r_n(t)) \text{ is connected}\big\} b_{j,r_n(t)}(x,x+s_n \by) \\
&\qquad \qquad \qquad \qquad \times \exp \big(  -nI_{r_n(t)}(x,x+s_n\by) \big) f(x) \prod_{j=1}^{i-1} f(x+s_n y_j) \dif x \dif \by \\
&=  \int_{\bbr^{d(i-1)}}\int_{\bbr^{d}} \one \big\{  \check{C} (\{ 0,\by \}, t) \text{ is connected}\big\} b_{j,t}(0,\by) \\
&\qquad \qquad \qquad \qquad \times \exp \big(  -nI_{r_n(t)}(x,x+s_n\by) \big) f(x) \prod_{j=1}^{i-1} f(x+s_n y_j) \dif x \dif \by, 
\end{align*}
where the equality follows from the location and scale invariance of both of the indicator functions. By the continuity of $f$ we have that $\prod_{j=1}^{i-1} f(x+s_n y_j) \to f(x)^{i-1}$ a.e. as $n\to\infty$. 
As for the convergence of the exponential term, we have 
\[
nI_{r_n(t)}(x, x + s_n\by) = n\int_{\B (\{ x, x+s_n \by \}; r_n(t))} f(z) \dif z,
\]
which after the change of variable $z = x + s_n v$, gives us 
\[
n\int_{\B (\{ x, x+s_n \by \}; r_n(t))} f(z) \dif z \to t^d f(x) m \Big( \B \big( \{ 0,\by \}; 1 \big) \Big). 
\]
It then follows from the dominated convergence theorem that 
$$
n^{i-1}\E[\grntij(\X i, \X i \cup \Pn)] \to \etakrd^{(i,j,j)}(t,t), \ \ n\to\infty. 
$$
It remains to find a summable upper bound for \eqref{e:expectation.critical} to apply the dominated convergence theorem for sums. To this end we use the inequality $j \leq \binom{i}{k+1}$ which is the result of the fact that there must be a $k$-simplex in $\check{C} (\mathcal X_i, r_n(t))$ whenever $\beta_k \bigl( \check{C} (\mathcal X_i, r_n(t)) \bigr)>0$. In addition, using an obvious inequality 
\begin{equation} \label{e:obvbound}
J_{i,r_n(t)}(\mathcal X_i, \mathcal X_i \cup \Pn) \leq \one \bigl\{  \C (\mathcal X_i, r_n(t)) \text{ is connected} \bigr\}, 
\end{equation}
we get that 
\begin{align}
n^{-1} \E [\beta_{k,n}(t)] &\leq \sum_{i=k+2}^\infty \binom{i}{k+1}  \frac{n^{i-1}}{i!} \sum_{j=1}^{\binom{i}{k+1}}  \E \bigl[ \, \one  \bigl\{  \C (\mathcal X_i, r_n(t)) \text{ is connected} \bigr\}\, b_{j,r_n(t)} (\mathcal X_i) \bigr] \label{e:bound.A1n}\\
&\leq \sum_{i=k+2}^\infty \binom{i}{k+1}  \frac{n^{i-1}}{i!}\, \P \bigl( \C (\mathcal X_i, r_n(t)) \text{ is connected}  \bigr). \notag
\end{align}
For further analysis we claim that 
\begin{equation}  \label{e:spanning.tree}
\P \bigl( \C (\mathcal X_i, r_n(t)) \text{ is connected}  \bigr) \leq i^{i-2} \bigl( r_n(t)^d \| f \|_\infty \theta_d  \bigr)^{i-1}. 
\end{equation}
Indeed this can be derived from 
\begin{align}
&\P \bigl( \C (\mathcal X_i, r_n(t)) \text{ is connected}  \bigr) \label{e:connectedness} \\
&= \int_{\bbr^{di}} \one \bigl\{  \C (\bx, r_n(t)) \text{ is connected} \bigr\}\, \prod_{j=1}^i f(x_j) \dif \bx \notag \\
&= r_n(t)^{d(i-1)} \int_{\bbr^{di}} \one \bigl\{  \C (\{ 0,\by \}, 1) \text{ is connected} \bigr\}\, f(x) \prod_{j=1}^{i-1} f(x+r_n(t) y_j) \dif x \dif \by \notag \\
&\leq \bigl(r_n(t)^d \| f \|_\infty \bigr)^{i-1} \int_{\bbr^{d(i-1)}} \one \bigl\{  \C (\{ 0,\by \}, 1) \text{ is connected} \bigr\} \dif \by \notag \\
&\leq i^{i-2} \bigl( r_n(t)^d \| f \|_\infty \theta_d \bigr)^{i-1}. \notag
\end{align}
The last inequality comes from the basic fact that there are $i^{i-2}$ spanning trees on $i$ vertices. 
Combining \eqref{e:bound.A1n}, \eqref{e:spanning.tree}, and $ns_n^d=1$ we conclude that 
\begin{align}
n^{-1} \E [\beta_{k,n}(t)] &\leq \frac{1}{(k+1)!}\sum_{i=k+2}^{\infty} \frac{i^{i-2}}{(i-k-1)!} (t^d \| f \|_{\infty} \theta_d)^{i-1} =: \frac{1}{(k+1)!}\sum_{i=k+2}^{\infty}a_i. \notag
\end{align}
It is easy to check that  $a_{i+1}/a_i \to et^d \| f \|_\infty \theta_d$ as $i\to\infty$, where the limit is less than $1$ by our assumption. So the ratio test has shown that $\sum_{i=k+2}^\infty a_i$ converges as required.  
\vspace{5pt}

\noindent \emph{\underline{Part 2}}: We assume $0 < t_1 \leq t_2 < (e \lVert f \rVert_{\infty} \theta_d)^{-1/d}$ and proceed with the fact that 
\begin{align*}
&\E[\beta_{k,n}(t_1)\beta_{k,n}(t_2)] \label{P:covsparse_skn} \\
&=  \sum_{i_1=k+2}^\infty  \sum_{i_2=k+2}^\infty \sum_{j_1>0} \sum_{j_2>0} j_1 j_2 \E\left[ \sum_{\Y_1 \subset \mathcal{P}_n} \sum_{\Y_2 \subset \mathcal{P}_n} \gone(\Y_1, \mathcal{P}_n)\, \gtwo(\Y_2, \mathcal{P}_n)  \right ] \notag \\
& = \sum_{i=k+2}^\infty \sum_{j_1>0} \sum_{j_2>0} j_1 j_2 \E\Bigg[ \sum_{\Y \subset \Pn} g_{r_n(t_1)}^{(i,j_1)}(\Y, \mathcal{P}_n)\, g_{r_n(t_2)}^{(i,j_2)}(\Y, \mathcal{P}_n)  \notag \Bigg]\\
&+ \sum_{i_1=k+2}^\infty  \sum_{i_2=k+2}^\infty \sum_{j_1>0} \sum_{j_2>0} j_1 j_2 \E \Biggl[ \sum_{\Y_1 \subset \Pn} \sum_{\Y_2 \subset \mathcal{P}_n} \gone(\Y_1, \mathcal{P}_n)\, \gtwo(\Y_2, \mathcal{P}_n) \one \bigl\{ |\Y_1 \cap \Y_2| = 0\bigr\} \Bigg].  \notag
\end{align*}
The second equality comes from an observation that if $\Y_1 \neq \Y_2$ and the intersection of $\Y_1$ and $\Y_2$ is non-empty, then $\check{C} ( \Y_2, r_n(t_2))$ cannot be an isolated component of $\check{C}(\Pn, r_n(t_2))$---so these terms are zero. Appealing to Palm theory again as seen in \cite{km}, we get that 
\begin{align*}
&\E[\beta_{k,n}(t_1)\beta_{k,n}(t_2)] \\
& = \sum_{i=k+2}^\infty \sum_{j_1>0} \sum_{j_2>0} j_1 j_2 \frac{n^i}{i!}\, \E\Big[  g_{r_n(t_1)}^{(i,j_1)}(\X i, \X i \cup \mathcal{P}_n)\,  g_{r_n(t_2)}^{(i,j_2)}(\X i, \X i \cup \mathcal{P}_n)  \notag \Big]\\
&+ \sum_{i_1=k+2}^\infty  \sum_{i_2=k+2}^\infty \sum_{j_1>0} \sum_{j_2>0} j_1 j_2\, \frac{n^{i_1 + i_2}}{i_1! i_2!}\, \\
&\qquad \qquad \qquad \qquad \times  \E \Bigl[  \gone(\Xio, \Xiot \cup \mathcal{P}_n)\, \gtwo(\Xit, \Xiot \cup \mathcal{P}_n) \Big], \notag
\end{align*}
where $\X i$ and $\Pn$ are independent, and $\Xio$, $\Xit$, and $\Pn$ are also mutually independent such that $\Xio$ and $\Xit$ are disjoint.  

Applying \eqref{e:expectation.critical} to each $\E[\beta_{k,n}(t_i)]$, $i=1,2$, and utilizing the independence of $\Xio$ and $\Xit$, we see that the covariance function can be written as 
\begin{equation}  \label{e:covariance.into.two}
\Cov(\beta_{k, n}(t_1), \beta_{k,n}(t_2)) = A_{1,n} + A_{2,n}, 
\end{equation}  
with
\begin{align}
A_{1,n} &:= \sum_{i=k+2}^\infty \sum_{j_1>0} \sum_{j_2>0} j_1j_2\frac{n^i}{i!}\, \E \bigl[ g_{r_n(t_1)}^{(i, j_1)} (\mathcal X_i, \mathcal X_i \cup \Pn) g_{r_n(t_2)}^{(i, j_2)} (\mathcal X_i, \mathcal X_i \cup \Pn)  \bigr], \label{e:A1n} \\
A_{2,n} &:= \sum_{i_1 = k+2}^{\infty} \sum_{i_2 = k+2}^{\infty} \sum_{j_1 >0} \sum_{j_2 >0} j_1 j_2\,  \frac{n^{i_1 + i_2}}{i_1! i_2!}\, \label{e:A2n}\\
&\phantom{{{{{{A_{2,n} := \sum_{i_1 = k+2}^{\infty} \sum_{i_2 = k+2}^{\infty}}}}}}} \times \E\big[\gone(\Xio, \Xiot \cup \Pn)\gtwo(\Xit, \Xiot \cup \Pn) \notag \\
&\phantom{{{{{{A_{2,n} := \sum_{i_1 = k+2}^{\infty} \sum_{i_2 = k+2}^{\infty} \times E[\gone(}}}}}} - \gone(\Xio, \Xio \cup \Pn)\gtwo(\Xit, \Xit \cup \Pn')\big],\notag 
\end{align}
where $\Pn'$ is an independent copy of $\Pn$ and is also independent of $\Xio$ and $\Xit$. 

Let us denote the expectation portions of $A_{1,n}$ and $A_{2,n}$ as $\Eonij$ and $\Etnij$, with $\bi = (i_1,i_2)$, and $\bj = (j_1,j_2)$ respectively. Our goal is to show that $n^{-1} (A_{1,n} + A_{2,n})$ tends to $\Phi_{k,\bbr^d}(t_1,t_2)$ as $n\to\infty$. For now we shall compute the limits of $n^{i-1} \Eonij$ and $n^{i_1+i_2-1} \Etnij$ for each $i, i_1, i_2, j_1$, and $j_2$, while temporarily assuming that the dominated convergence theorem for sums is applicable for both $n^{-1}A_{1,n}$ and $n^{-1}A_{2,n}$.  
By mirroring the argument from Part 1 with the same change of variables and recalling $t_1 \leq t_2$, 
\begin{align*}
n^{i-1} \Eonij &= n^{i-1} \E \Big[ \one \big\{ \check{C} (\X i, r_n(t_1)) \text{ is connected}  \big\}  \prod_{\ell=1}^2 b_{j_\ell, r_n(t_\ell)} (\X i) \exp \big(  -nI_{r_n(t_2)} (\X i) \big)\Big] \\
&=\int_{\bbr^{d(i-1)}}\int_{\bbr^d} \one \big\{ \check{C} \big( \{ 0,\by \}, t_1 \big) \text{ is connected} \big\}\, \prod_{\ell=1}^2 b_{j_\ell, t_\ell} (0,\by) \\
&\qquad \qquad  \quad \times \exp \big( -nI_{r_n(t_2)}(x,x + s_n \by) \big)f(x) \prod_{j=1}^{i-1} f(x+s_n y_j) \dif x \dif \by \\
&\to  \etakrd^{(i,j_1,j_2)}(t_1, t_2)  \ \ \text{as } n\to\infty.
\end{align*}
Hence the assumed dominated convergence theorem for sums concludes that
\begin{equation}  \label{e:conv.A1n}
n^{-1}A_{1,n} \to \sum_{i=k+2}^\infty \sum_{j_1 >0}\sum_{j_2 >0} \frac{j_1j_2}{i!}\, \etakrd^{(i, j_1, j_2)}(t_1, t_2)\, \quad n \to \infty.
\end{equation}

To demonstrate convergence for $n^{i_1+i_2-1} \Etnij$, let us shorten $\gone$ to $g_1$ and $\gtwo$ to $g_2$ and decompose $\Etnij$ into two terms:
\begin{align*}
\Etnij &= \E \bigl[ g_1(\Xio, \Xiot \cup \Pn) g_2(\Xit, \Xiot \cup \Pn)   -  g_1(\Xio, \Xio \cup \Pn) g_2(\Xit, \Xit \cup \Pn)  \bigr] \notag  \\
&+ \E\bigl[g_1(\Xio, \Xio \cup \mathcal{P}_n)\bigl( g_2(\Xit, \Xit \cup \mathcal{P}_n) - g_2(\Xit, \Xit \cup \mathcal{P}^{\prime}_n)\bigr)\bigr]  \notag  \\
&:= B_{1,n} + B_{2,n}. \notag 
\end{align*}
Note that for $\ell=1,2,$
$$
g_\ell (\mathcal X_{i_\ell}, \Xiot \cup \Pn) = g_\ell (\mathcal X_{i_\ell}, \mathcal X_{i_\ell} \cup \Pn)\, \one \bigl\{ \B\bigl(\Xio; r_n(t_\ell)/2\bigr)\cap \B\bigl(\Xit; r_n(t_\ell)/2\bigr) = \emptyset \bigr\}, 
$$
where $\B (\mathcal X; r)$ is defined  in \eqref{e:union.balls}. 
Hence we have that 
\begin{equation*}  
B_{1,n} = - \E \bigl[ g_1(\Xio,\Xio \cup \Pn)\, g_2(\Xit, \Xit \cup \Pn)\, \alpha_{r_n(t_2)/2} (\Xio, \Xit) \bigr]. 
\end{equation*}
At the same time, the spatial independence of $\Pn$ justifies  that
\begin{align*}
B_{2,n} &= \E\bigl[g_1(\Xio, \Xio \cup \mathcal{P}_n)\bigl(g_2(\Xit, \Xit \cup \mathcal{P}_n) - g_2(\Xit, \Xit \cup \mathcal{P}^{\prime}_n)\bigr)\, \alpha_{r_n(t_1), r_n(t_2)} (\Xio, \Xit)\big]. 
\end{align*}
Consequently we can rewrite $\Etnij$ as 
\begin{align}
\Etnij&= \E \Big[  g_1 (\Xio, \Xio \cup \Pn) g_2 (\Xit, \Xit \cup \Pn)   \big( \alpha_{r_n(t_1), r_n(t_2)}(\Xio, \Xit) - \alpha_{r_n(t_2)/2}(\Xio, \Xit)  \big) \Big]   \label{e:Etnij} \\
&\qquad - \E \big[ g_1 (\Xio, \Xio \cup \Pn) g_2 (\Xit, \Xit \cup \Pn') \alpha_{r_n(t_1), r_n(t_2)} (\Xio, \Xit)  \big] \notag \\
&:= C_{1,n} - C_{2,n}. \notag
\end{align}
After conditioning on $\Xiot$, the customary change of variable yields
\begin{align*}
n^{i_1 + i_2 - 1}C_{1,n} &= n^{i_1 + i_2 - 1} \E \bigg[ \prod_{\ell=1}^2 \one \big\{ \check{C} (\mathcal X_{i_\ell}, r_n(t_\ell)) \text{ is connected} \big\}\, b_{j_\ell, r_n(t_\ell)} (\mathcal X_{i_\ell})  \\
&\qquad \qquad \qquad  \times \bigl( \alpha_{r_n(t_1), r_n(t_2)}(\Xio, \Xit) - \alpha_{r_n(t_2)/2} (\Xio, \Xit)  \big) \\
&\qquad \qquad \qquad  \times \exp \Big( -n\int_{\B (\Xio; r_n(t_1)) \cup \B(\Xit; r_n(t_2))} f(z)\dif z \Big)\bigg] \\
&= \int_{\bbr^d} \dif x \int_{\bbr^{d(i_1 -1)}} \dif \by_1 \int_{\bbr^{di_2}} \dif \by_2 \, \one \bigl\{ \check{C} \big( \{ 0,\by_1 \}, t_1 \big) \text{ is connected} \bigr\}\, \\
&\qquad \times \one \bigl\{ \check{C} \big(  \by_2, t_2 \big) \text{ is connected} \bigr\} b_{j_1,t_1}(0,\by_1)\, b_{j_2, t_2}(\by_2) \\
&\qquad \times \Big( \alpha_{t_1,t_2} \big(\{ 0,\by_1 \}, \by_2\big)  - \alpha_{t_2/2} \big(\{0,\by_1 \}, \by_2\big) \Big) \\
&\qquad \times \exp \Big( -n\int_{\B (\{x,x+s_n \by_1  \}; r_n(t_1)) \cup \B (x + s_n \by_2; r_n(t_2)) }f(z)\dif z \Big) \\
&\qquad \times  f(x) \prod_{j=1}^{i_1-1} f(x + s_n y_{1,j})\prod_{j=1}^{i_2} f(x + s_n y_{2,j}) \\
&\to\int_{\bbr^d} \dif x \int_{\bbr^{d(i_1 -1)}} \dif \by_1 \int_{\bbr^{di_2}} \dif \by_2 \, \one \bigl\{ \check{C} \big( \{ 0,\by_1 \}, t_1 \big) \text{ is connected} \bigr\}\, \\
&\qquad \times \one \bigl\{ \check{C} \big(  \by_2, t_2 \big) \text{ is connected} \bigr\} b_{j_1,t_1}(0,\by_1)\, b_{j_2, t_2}(\by_2) \\
&\qquad \times \Big( \alpha_{t_1,t_2} \big(\{ 0,\by_1 \}, \by_2\big)  - \alpha_{t_2/2} \big(\{0,\by_1 \}, \by_2\big) \Big) \\
&\qquad \times e^{-f(x) m \big( \B (\{ 0,\by_1 \}; t_1) \cup \B (\by_2; t_2) \big)} f(x)^{i_1 + i_2}, 
\end{align*}
where $\by_1 = (y_{1,1}, \dots, y_{1,i_1-1}) \in \bbr^{d(i_1-1)}$ and $\by_2 = (y_{2,1}, \dots, y_{2,i_2}) \in \bbr^{di_2}$. \\
Similarly  one can see that
\begin{align*}
n^{i_1 + i_2 - 1}C_{2,n} &\to \int_{\bbr^d} \dif x \int_{\bbr^{d(i_1 -1)}} \dif \by_1 \int_{\bbr^{di_2}} \dif \by_2 \, \one \bigl\{ \check{C} \big( \{ 0,\by_1 \}, t_1 \big) \text{ is connected} \bigr\}\, \\
&\qquad \times \one \bigl\{ \check{C} \big(  \by_2, t_2 \big) \text{ is connected} \bigr\} b_{j_1,t_1}(0,\by_1)\, b_{j_2, t_2}(\by_2) \alpha_{t_1, t_2} \big( \{ 0,\by_1 \}, \by_2 \big)\\
&\qquad \times e^{-f(x) \big\{ m(\B(\{ 0,\by_1 \}; t_1)) + m (\B(\by_2; t_2)) \big\}} f(x)^{i_1 + i_2}. 
\end{align*}
Therefore, 
$$
n^{i_1 + i_2 - 1} \Etnij = n^{i_1 + i_2 - 1}( C_{1,n} - C_{2,n} ) \to \nukrd^{(i_1,i_2,j_1,j_2)} (t_1,t_2), \ \ \ n\to\infty.  
$$
Assuming convergence under summation, we have that 
\begin{equation}  \label{e:conv.A2n}
n^{-1}A_{2,n} \to \sum_{i_1=k+2}^\infty \sum_{i_2 = k+2}^\infty \sum_{j_1>0} \sum_{j_2 > 0} \frac{j_1 j_2}{i_1! i_2!}\, \nukrd^{(i_1, i_2, j_1, j_2)}(t_1, t_2), \ \ n\to\infty. 
\end{equation}
From \eqref{e:conv.A1n} and \eqref{e:conv.A2n}, it follows that $n^{-1} (A_{1,n} + A_{2,n}) \to \Phi_{k,\bbr^d}(t_1, t_2)$ as $n\to\infty$. 

Now we would like to show that both $n^{i-1} \Eonij$ and $n^{i_1 + i_2-1}| \Etnij|$ are bounded by a summable quantity, so that application of the dominated convergence theorem for sums is valid for both $n^{-1} A_{1,n}$ and $n^{-1} A_{2,n}$. 
Using the bounds \eqref{e:obvbound}, \eqref{e:spanning.tree}, together with $ns_n^d=1$, we have
\begin{align}
n^{-1} A_{1,n} &\leq \sum_{i=k+2}^\infty \sum_{j_1>0} \sum_{j_2>0}j_1 j_2\,  \frac{n^{i-1}}{i!}\, \E\bigg[ \one \bigl\{  \C (\mathcal{X}_i, r_n(t_1)) \text{ is connected} \bigr\} \prod_{\ell=1}^{2} b_{j_{\ell}, r_n(t_{\ell})}(\mathcal{X}_i) \bigg] \label{e:A1n.bound}\\
&\leq \sum_{i = k+2}^{\infty} \binom{i}{k+1}^2 \frac{n^{i-1}}{i!}\, \P\big(  \C (\mathcal X_{i}, r_n(t_1)) \text{ is connected} \big) \notag \\
&\leq \frac{1}{\big((k+1)!\big)^2} \sum_{i=k+2}^\infty \frac{i! i^{i-2}}{\big( (i-k-1)! \big)^2}\, \big( t_1^d \| f \|_\infty \theta_d \big)^{i-1}. \notag
\end{align}
The last term is convergent by appealing to the assumption $t_1 < (e \| f \|_\infty \theta_d)^{-1/d}$ and the ratio test for sums. 

Subsequently we turn our attention to $n^{-1}A_{2,n}$. Returning to \eqref{e:Etnij} and using obvious relations 
$$
\alpha_{r_n(t_1), r_n(t_2)} (\Xio, \Xit) \leq \alpha_{r_n(t_2)} (\Xio, \Xit), \quad \alpha_{r_n(t_2)/2} (\Xio, \Xit) \leq \alpha_{r_n(t_2)} (\Xio, \Xit),
$$
we get that
$$
|C_{1,n} - C_{2,n}| \leq 3 \E \Big[ \prod_{\ell=1}^2 \one \big\{  \check{C} (\mathcal X_{i_\ell}, r_n(t_2)) \text{ is connected} \big\}\, b_{j_\ell, r_n(t_\ell)} (\mathcal X_{i_\ell})\, \alpha_{r_n(t_2)}(\Xio, \Xit) \Big]
$$
By virtue of this bound we have that
\begin{align}
n^{-1} |A_{2,n}| &\leq 3 \sum_{i_1 = k+2}^{\infty} \sum_{i_2 = k+2}^{\infty} \sum_{j_1>0} \sum_{j_2>0} j_1 j_2\,  \frac{n^{i_1 + i_2 - 1}}{i_1! i_2!} \label{e:A2n.bound} \\
&\times \E \Big[  \prod_{\ell=1}^2 \one \big\{  \check{C} (\mathcal X_{i_\ell}, r_n(t_2)) \text{ is connected} \big\}\, b_{j_\ell, r_n(t_\ell)} (\mathcal X_{i_\ell})\, \alpha_{r_n(t_2)}(\Xio, \Xit) \Big] \notag \\
&\leq 3 \sum_{i_1 = k+2}^{\infty} \sum_{i_2 = k+2}^{\infty} \binom{i_1}{k+1} \binom{i_2}{k+1} \,  \frac{n^{i_1 + i_2 - 1}}{i_1! i_2!} \notag \\
&\times \P \Bigl( \C (\mathcal X_{i_\ell}, r_n(t_2)) \text{ is connected for } \ell=1,2, \ \B \bigl(\Xio; r_n(t_2)\bigr) \cap \B \bigl(\Xit; r_n(t_2)\bigr) \neq \emptyset \Bigr).  \notag
\end{align} 
We claim here that 
\begin{align}
&\P \Bigl( \C (\mathcal X_{i_\ell}, r_n(t_2)) \text{ is connected for } \ell=1,2, \ \B \bigl(\Xio; r_n(t_2)\bigr) \cap \B \bigl(\Xit; r_n(t_2)\bigr) \neq \emptyset \Bigr) \label{e:spanning.tree.bound2}\\
&\leq 2^d i_1^{i_1-1} i_2^{i_2-1} \bigl( r_n(t_2)^d \| f \|_\infty \theta_d \bigr)^{i_1+i_2-1}. \notag
\end{align}
To see this, by the change of variables as in \eqref{e:connectedness}, we have that 
\begin{align*} 
&\P \Bigl( \C (\mathcal X_{i_\ell}, r_n(t_2)) \text{ is connected for } \ell=1,2, \ \B \bigl(\Xio; r_n(t_2)\bigr) \cap \B \bigl(\Xit; r_n(t_2)\bigr) \neq \emptyset \Bigr) \notag \\
&\leq \bigl( r_n(t_2)^d \| f \|_\infty \bigr)^{i_1+i_2-1} \int_{\bbr^{d(i_1+i_2-1)}} \one \bigl\{ \C (\{0,y_1,\dots,y_{i_1-1} \}, 1) \text{ is connected} \bigr\} \notag \\
&\qquad \qquad \qquad \qquad \quad  \times \one \bigl\{ \C (\{y_{i_1},\dots,y_{i_1+i_2-1} \}, 1) \text{ is connected} \bigr\} \notag \\
&\qquad \qquad \qquad \qquad \quad  \times \one \bigl\{ \B (\{ 0,y_1,\dots,y_{i_1-1} \}; 1) \cap \B (\{ y_{i_1},\dots,y_{i_1+i_2-1} \}; 1) \neq \emptyset \bigr\} \dif \by. 
\end{align*}
Note that there are $i_1^{i_1-2}$ spanning trees on the set of points $\{ 0,y_1,\dots,y_{i_1-1} \}$ with unit connectivity radius, and there are $i_2^{i_2-2}$ spanning trees on $\{ y_{i_1}, \dots, y_{i_1 + i_2 -1} \}$ with unit connectivity radius as well. In addition there are $i_1 \times i_2$ possible ways of picking one vertex from $\{ 0,y_1,\dots,y_{i_1-1} \}$ and another from $\{ y_{i_1}, \dots, y_{i_1+i_2-1} \}$, and connecting the two chosen vertices with connectivity radius $2$. Therefore, the expression above is eventually bounded by 
$$
\bigl( r_n(t_2)^d \| f \|_\infty \bigr)^{i_1+i_2-1} i_1^{i_1-2} i_2^{i_2-2} \theta_d^{i_1 + i_2 - 2} (i_1 i_2 2^d \theta_d) = 2^d i_1^{i_1-1} i_2^{i_2-1} \bigl( r_n(t_2)^d \| f \|_\infty \theta_d \bigr)^{i_1+i_2-1}.
$$ 
Now we have 
$$
n^{-1} |A_{2,n}| \leq \frac{3\cdot 2^d}{\big( (k+1)! \big)^2 t_2^d \|f  \|_\infty \theta_d}\, \bigg\{ \sum_{i=k+2}^\infty \frac{i^{i-1}}{(i-k-1)!}\, \big( t_2^d \| f \|_\infty \theta_d \big)^i \bigg\}^2. 
$$
The constraint $t_2 < (e\| f \|_\infty \theta_d)^{-1/d}$, together with the ratio test, guarantees that the last term converges. Hence the proof is completed. 
\end{proof}

\begin{proof}[Proof of Theorem \ref{T:cclt}]
We begin by proving the corresponding result for the truncated Betti number in \eqref{e:truncated.Betti} for every $M \in \bbn$, that is, 
$$
n^{-1/2} \bigl( \beta_{k,n}^{(M)}(t_i)  -\E [\beta_{k,n}^{(M)}(t_i)], \, i=1,\dots,m \bigr) \Rightarrow \bigl( \Hk^{(M)}(t_i) \, i=1,\dots,m \bigr), 
$$
where $\Hk^{(M)}$ is the ``truncated" limiting centered Gaussian process given by
$$
\Hk^{(M)}(t) = \sum_{i=k+2}^M \sum_{j>0} j \Hk^{(i,j)}(t). 
$$ 

We now restrict ourselves to the case in which the corresponding left most points belong to a fixed bounded set $A$. By the Cram\'er-Wold device, we need to 
demonstrate a univariate central limit theorem for $\sum_{i=1}^m a_i \beta_{k,n,A}^{(M)}(t_i)$, where $a_i \in \bbr$, $m \geq 1$. The asymptotic variance of $\sum_{i=1}^m a_i \beta_{k,n,A}^{(M)}(t_i)$ scaled by $n^{-1/2}$ can be derived from Proposition \ref{P:covcritical}~$(i)$: 
\begin{align}
\begin{split}
\Var\Bigl(n^{-1/2} \sum_{i=1}^m a_i \beta_{k,n,A}^{(M)}(t_i) \Bigr) &= \sum_{i=1}^m \sum_{j=1}^m a_i a_j n^{-1} \Cov(\beta_{k,n,A}^{(M)}(t_i), \beta_{k,n,A}^{(M)}(t_j)) \\
&\to \sum_{i=1}^m \sum_{j=1}^m a_i a_j \Phi_{k,A}^{(M)}(t_i,t_j), \ \ \ n\to\infty. \label{T:sparse_gamma}
\end{split}
\end{align}

Our proof exploits Stein's normal approximation method for weakly dependent random variables, as in Theorem 2.4 in \cite{penr}. We assume the limit in \eqref{T:sparse_gamma} is positive as otherwise our proof is trivial. Define $t := \max\{t_1, \dots, t_m \} = t_m$ and let $(Q_{j,n}, \, j \in \mathbb{N})$ be an enumeration of almost disjoint closed cubes of side length $r_n(t)$, such that $\cup_{j \in \bbn} Q_{j,n} = \R^d$. Recalling $A$ is bounded, we define
\[
V_n := \{j \in \mathbb{N}: Q_{j,n} \cap A \neq \emptyset\},
\]
and 
\[
\xi_{j, n} :=  \sum_{i=1}^m  a_i \beta_{k,n,A\cap Q_{j,n}}^{(M)}(t_i),
\]
so that $\sum_{i=1}^m a_i \beta_{k,n,A}^{(M)}(t_i) = \sum_{j \in V_n} \xi_{j, n} $. We now turn $V_n$ into the vertex set of a \textit{dependency graph} (see Section 2.1 in \cite{penr} for the definition) by declaring that for $j, j' \in V_n$, $j \sim {\pr j}$ if and only if $d(Q_{j,n}, Q_{{\pr j}, n}) \leq 2Mr_n(t)$. It is easy to show that this provides us with the required independence properties, that is, for any vertex set $I_1, I_2 \subset V_n$ with no edges connecting vertices in $I_1$ and those in $I_2$, we have that $(\xi_{j,n}, \,  j \in I_1)$ and $(\xi_{j,n}, \,  j \in I_2)$ are independent. Note moreover that the degree of $(V_n, \sim)$ is uniformly bounded regardless of $n$. 
Since $A$ is a bounded set, we have $| V_n| = \mathcal O (s_n^{-d})$. Let $Y_{j,n}$ denote the number of points of $\Pn$ belonging to 
$$
\text{Tube}(Q_{j,n}, Mr_n(t)) := \bigl\{ x \in \bbr^d: \inf_{y \in Q_{j,n}} |x-y| \leq Mr_n(t)  \bigr\}. 
$$
Then we have
\begin{align*}
|\xi_{j,n}| &\leq \sum_{i=1}^m |a_i| \beta_{k,n, A\cap Q_{j,n}}^{(M)} (t_i) \\
&\leq \sum_{i=1}^m |a_i| \beta_k \Big(  \check{C} \big( \Pn \cap \text{Tube} \big( Q_{j,n}, Mr_n(t) \big), r_n(t_i) \big) \Big) \\
&\leq \sum_{i=1}^m |a_i| \binom{Y_{j,n}}{k+1}. 
\end{align*}
By definition, $Y_{j,n}$ is Poisson distributed with parameter
\[
\lambda_{j,n} := n\int_{\text{Tube}(Q_{j,n}, Mr_n(t))} f(z) \, \dif{z}, 
\]
which itself yields an upper bound of the form
\begin{equation} \label{e:poi.param.bound}
\lambda_{j,n} \leq n \| f \|_{\infty} m \Bigl(  \text{Tube}\bigl(Q_{j,n}, Mr_n(t)\bigr)\Bigr) := c. 
\end{equation}
This implies that $Y_{j,n}$ is stochastically dominated by a Poisson random variable, which we call $Y$, with parameter $c$. The assumption $ns_n^d=1$ ensures that $c$ does not depend on $n$, and for the rest of the proof, let $C^*$ denote a generic positive constant which is independent of $n$ but may vary between lines. 

We get that for $\alpha \in \mathbb{N}$
\begin{equation}  \label{e:bound.xijn}
\E [|\xi_{j,n}|^\alpha] \leq \Bigl( \sum_{i=1}^m |a_i| \Bigr)^\alpha \E \left[ \binom{Y_{j,n}}{k+1}^\alpha \right] 
\leq \Bigl( \sum_{i=1}^m |a_i| \Bigr)^\alpha \E \left[ \binom{Y}{k+1}^\alpha \right] =C^*
\end{equation} 
Letting 
\[
\pr{\xi}_{j,n} := \frac{\xi_{j,n} - \E[\xi_{j,n}]}{\sqrt{\Var(\sum_{i=1}^m a_i \beta_{k,n,A}^{(M)}(t_i))}},
\]
it is clear that $(V_n, \sim)$ still constitutes a dependency graph for the $(\pr{\xi}_{j,n}, \, j \in \mathbb{N})$ because independence is not affected by affine transformations. 
Let $Z$ be a standard normal random variable. 
It then follows from Stein's normal approximation method (i.e. Theorem 2.4 from \cite{penr}) that for all $x\in \bbr$, 
\begin{align*}
\Bigl|\,  \P\bigl( \sum_{j \in V_n} \xi_{j,n}' \leq x \bigr) - &\P(Z \leq x) \, \Bigr| \leq C^*\left( \sqrt{s_n^{-d} \E \bigl[ |\xi_{j,n}'|^3 \bigr]} + \sqrt{s_n^{-d} \E \bigl[ |\xi_{j,n}'|^4 \bigr]}   \right) \\
&\leq C^*\left( \sqrt{s_n^{-d}n^{-3/2} \E \bigl[ |\xi_{j,n}-\E[\xi_{j,n}]|^3 \bigr]} + \sqrt{s_n^{-d} n^{-2}\E \bigl[ |\xi_{j,n} -\E[ \xi_{j,n}]|^4 \bigr]}   \right),
\end{align*}
where we have applied \eqref{T:sparse_gamma} for the second inequality. 

Now we have by \eqref{e:bound.xijn} that $\E\bigl[|\xi_{j,n} - \E[\xi_{j,n}]|^p \bigr] \leq C^*$ for $p=3, 4$, 
so that 
$$
s_n^{-d} n^{-p/2} \E \bigl[ |\xi_{j,n} - \E [\xi_{j,n}]|^p \bigr] \leq C^* n^{1-p/2} \to 0, \ \ \ n\to\infty. 
$$
From the argument thus far we conclude that 
$$
\sum_{j \in V_n} \xi_{j,n}' \Rightarrow Z,
$$ 
which in turn implies 
$$
n^{-1/2} \bigl( \beta_{k,n,A}^{(M)}(t_i) - \E \bigl[ \beta_{k,n,A}^{(M)}(t_i) \bigr], \, i=1,\dots,m \bigr) \Rightarrow \mathcal N \bigl( 0, (\Phi_{k,A}^{(M)}(t_i, t_j))_{i,j=1}^m \bigr) 
$$
for all bounded sets $A$. 
The case when $A$ is unbounded can be established by standard approximation arguments nearly identical to those in  \cite{km} and \cite{penr}, so we omit the details and conclude that as $n\to\infty$ 
\begin{equation*} 
n^{-1/2} \bigl( \beta_{k,n}^{(M)}(t_i) - \E \bigl[ \beta_{k,n}^{(M)}(t_i) \bigr], \, i=1,\dots,m \bigr) \Rightarrow \mathcal N \bigl( 0, (\Phi_{k,\bbr^d}^{(M)}(t_i,t_j))_{i,j=1}^m \bigr).
\end{equation*}
This is equivalent to
$$
n^{-1/2} \bigl( \beta_{k,n}^{(M)}(t_i) - \E \bigl[ \beta_{k,n}^{(M)}(t_i) \bigr], \, i=1,\dots,m \bigr) \Rightarrow \bigl( \Hk^{(M)}(t_i), \, i=1,\dots,m \bigr),
$$
as $n\to\infty$. Additionally, as $M\to\infty$
$$
\big( \mathcal H_k^{(M)} (t_i), \, i=1,\dots,m \big) \Rightarrow \big( \mathcal H_k (t_i), \, i=1,\dots,m \big), 
$$
since $\Phi_{k,\bbr^d}^{(M)}(t_i, t_j) \to \Phi_{k,\bbr^d} (t_i, t_j)$ as $M\to\infty$. According to Theorem 3.2 in \cite{billingsley:1999} it suffices to show that for every $t>0$ and $\epsilon>0$, 
\begin{equation}  \label{e:billingsley.theorem3.2}
\lim_{M\to\infty}\limsup_{n\to\infty} \P \Big( \, \big| \beta_{k,n}(t) - \beta_{k,n}^{(M)}(t) -\E [\beta_{k,n}(t) - \beta_{k,n}^{(M)}(t)] \big| > \epsilon n^{1/2}  \Big) = 0. 
\end{equation}
By the Cauchy-Schwarz inequality, the probability in \eqref{e:billingsley.theorem3.2} is bounded by 
$$
\frac{1}{\epsilon^2 n} \text{Var} \big( \beta_{k,n}(t) - \beta_{k,n}^{(M)}(t) \big), 
$$
which itself converges to 
\begin{equation}  \label{e:remaining.variance}
\frac{1}{\epsilon^2}\, \sum_{i_1=M+1}^\infty \sum_{i_2=M+1}^\infty \sum_{j_1 > 0} \sum_{j_2 > 0} j_1 j_2\, \bigg( \frac{\etakrd^{(i_1, j_1, j_2)}(t_1, t_2) \delta_{i_1, i_2}}{i_1!}+ \frac{\nukrd^{(i_1, i_2, j_1, j_2)}(t_1, t_2)}{i_1! i_2!}\bigg) \ \ \text{as }n\to\infty. 
\end{equation}
Since $\Phi_{k,\bbr^d}(t,t)$ is a finite constant, \eqref{e:remaining.variance} goes to $0$ as $M\to\infty$. 
\end{proof}

\begin{proof}[Proof of Corollary \ref{c:SLLN}]
Theorem 4.6 in \cite{yogesh17} verified that 
$$
\lim_{n\to\infty} n^{-1} \big( \beta_{k,n}(t) - \E [\beta_{k,n}(t)] \big) = 0
$$
almost surely. Combining this with Proposition \ref{P:covcritical}~$(ii)$ proves the claim. 
\end{proof}

\subsection{Sparse regime}  \label{s:proof.subcritical}
As with the critical regime case, the key results for proving a central limit theorem are those on asymptotic moments that can be seen in the proposition below. As discussed in Section~\ref{s:subcritical}, the probabilistic features of these moments are asymptotically determined by $S_{k,n}(t)$. Many functions and objects in Section \ref{s:proof.critical} will be carried over for use in this section. 
\begin{proposition}\label{P:covsparse}
Let $f$ be an almost everywhere bounded and continuous density function. If $ns_n^d \to 0$ and $A \subset \mathbb{R}^d$ is open with $m(\partial A) = 0$, then we have that for $t > 0$, 
$$
\rho_n^{-1} \E [\beta_{k,n,A}(t)] \to \mu_{k,A}(t,t), \ \ \ n\to\infty,
$$
and for $t_1, t_2 > 0$, 
\begin{equation*} 
\rho_n^{-1} \Cov(\beta_{k,n,A}(t_1), \beta_{k,n,A}(t_2)) \to \mu_{k,A}(t_1,t_2), \ \ \ n\to\infty,
\end{equation*}
where 
$$
\mu_{k,A} (t_1, t_2) := \frac{1}{(k+2)!} \int_{A} f(x)^{k+2} \ \dif{x} \int_{\bbr^{d(k+1)}} h_{t_1}(0, \by)h_{t_2}(0, \by) \dif{\by}.
$$
\end{proposition}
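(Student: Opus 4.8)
The plan is to mirror the proof of Proposition \ref{P:covcritical} verbatim, replacing the scaling constant $n$ by $\rho_n = n^{k+2}s_n^{d(k+1)} = n(ns_n^d)^{k+1}$ and tracking how the hypothesis $ns_n^d \to 0$ collapses all contributions to those of the minimal ($i=k+2$) components. First I would treat the expectation. Exactly as in \eqref{e:expectation.critical}, Palm theory gives
$$\E[\beta_{k,n,A}(t)] = \sum_{i\geq k+2}\sum_{j>0} j\,\frac{n^i}{i!}\,\E\bigl[g_{r_n(t),A}^{(i,j)}(\X{i},\X{i}\cup\Pn)\bigr],$$
and the substitution $x_1=x$, $x_\ell = x+s_ny_{\ell-1}$ turns each summand into $\tfrac{n(ns_n^d)^{i-1}}{i!}$ times an integral that, for each fixed $(i,j)$, is dominated and convergent. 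The decisive difference from the critical regime is the exponential weight: here
$$nI_{r_n(t)}(x,x+s_n\by) = (ns_n^d)\int_{\B(\{0,\by\};t)} f(x+s_nv)\,\dif v \longrightarrow 0,$$
so this factor tends to $1$ rather than to a nondegenerate Gaussian-type weight. Dividing by $\rho_n$ leaves the prefactor $(ns_n^d)^{\,i-k-2}$ in front of the $i$-th term; since $ns_n^d\to 0$ this annihilates every term with $i>k+2$ and preserves only $i=k+2$, which forces $j=1$ because $k+2$ points support at most one $k$-cycle.

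For the surviving term I would use that $b_{1,t}(0,\by)=h_t(0,\by)$ on $k+2$ points, and that for $k\geq 1$ the presence of all $k$-faces (encoded by $h_t$) already makes the complex connected—every pair of vertices lies in some $k$-face—so the factor $\one\{\check{C}(\{0,\by\},t)\text{ is connected}\}$ may be dropped. With the $\one_A(x)$ carried along from the left-most-point restriction (which localizes to $x\in A$ as the cluster shrinks), passing to the limit yields
$$\rho_n^{-1}\E[\beta_{k,n,A}(t)] \to \frac{1}{(k+2)!}\int_A f(x)^{k+2}\,\dif x\int_{\bbr^{d(k+1)}} h_t(0,\by)\,\dif\by = \mu_{k,A}(t,t),$$
the final equality using $h_t\in\{0,1\}$ so that $h_t=h_th_t$. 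The covariance is handled through the same decomposition $\Cov = A_{1,n}+A_{2,n}$ as in \eqref{e:covariance.into.two}. For the coincident-set term $A_{1,n}$ the argument above applies directly, now with $b_{1,t_1}(0,\by)b_{1,t_2}(0,\by)=h_{t_1}(0,\by)h_{t_2}(0,\by)$ appearing in the integrand, delivering $\rho_n^{-1}A_{1,n}\to\mu_{k,A}(t_1,t_2)$. For the disjoint-set term $A_{2,n}$, the factor $\alpha$ forces the two clusters within $O(s_n)$ of each other, so after the change of variables and integrating out the common reference point each summand carries $(ns_n^d)^{i_1+i_2-1}$; dividing by $\rho_n$ produces $(ns_n^d)^{\,i_1+i_2-k-2}$, and since $i_1+i_2\geq 2(k+2)$ each exponent is at least $k+2\geq 1$, whence $\rho_n^{-1}A_{2,n}\to 0$. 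Thus the covariance is governed entirely by $A_{1,n}$.

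The main obstacle is justifying the interchange of limit and infinite sum, because—unlike the critical regime—the statement asserts no upper bound on $t$, and the crude spanning-tree bound \eqref{e:spanning.tree} is summable only for $t<(e\|f\|_\infty\theta_d)^{-1/d}$. The resolution exploits $ns_n^d\to 0$ once more. After dividing by $\rho_n$, the $i$-th term is bounded by $c_i^{(n)} := \tfrac{i^{i-2}}{i!}\binom{i}{k+1}(ns_n^d)^{\,i-k-2}(t^d\|f\|_\infty\theta_d)^{\,i-1}$ (with the analogous bound via \eqref{e:spanning.tree.bound2} controlling $A_{2,n}$), and the ratio $c_{i+1}^{(n)}/c_i^{(n)} = (1+1/i)^{i-2}\tfrac{i+1}{i-k}\,(ns_n^d)\,t^d\|f\|_\infty\theta_d$ is bounded, uniformly in $i\geq k+3$, by $K(ns_n^d)t^d\|f\|_\infty\theta_d =: q_n$ for a constant $K=K(k)$, with $q_n\to 0$. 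Hence for $n$ large (depending on $t$) we have $q_n<1$ and $\sum_{i>k+2}c_i^{(n)}\leq c_{k+3}^{(n)}/(1-q_n)$, where $c_{k+3}^{(n)}=O(ns_n^d)\to 0$. This simultaneously certifies dominated convergence for the sums and confirms that for every $t>0$ only the minimal component survives, completing the proof.
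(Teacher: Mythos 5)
Your proposal is correct and follows essentially the same route as the paper: Palm theory plus the change of variables, the decomposition $\Cov = A_{1,n}+A_{2,n}$ with the $i=k+2$ term isolated, the observation that $nI_{r_n(t)}\to 0$ trivializes the exponential, the factor $(ns_n^d)^{i-k-2}$ (resp.\ $(ns_n^d)^{i_1+i_2-k-2}$) annihilating all non-minimal terms, and the spanning-tree bounds \eqref{e:spanning.tree} and \eqref{e:spanning.tree.bound2} to dominate the sums. Your explicit ratio-test bound with $q_n\to 0$ is just a more detailed version of the paper's remark that a summable majorant exists for large $n$, so there is nothing substantive to add.
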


\begin{proof}
We only discuss the covariance result in the case $A=\bbr^d$. Throughout the proof we assume $0 < t_1 \leq t_2$. 
We first derive the same expression as in \eqref{e:covariance.into.two} : 
$$
\text{Cov} \big(\beta_{k,n}(t_1), \beta_{k,n}(t_2)\big) = A_{1,n} + A_{2,n},
$$
where $A_{1,n}$ and $A_{2,n}$ are given in \eqref{e:A1n}, \eqref{e:A2n} respectively. Observing that $g_{r_n(t)}^{(k+2,j)}(\X i, \X i \cup \Pn) = 0$ for all $j \geq 2$ and any $t > 0$, we can split $A_{1,n}$ into two parts, $A_{1,n} = D_{1,n} + D_{2,n}$, where 
$$
D_{1, n} := \frac{n^{k+2}}{(k+2)!}\, \E \big[ g_{r_n(t_1)} (\X{k+2}, \X{k+2} \cup \Pn)\,  g_{r_n(t_2)} (\X{k+2}, \X{k+2} \cup \Pn) \big], 
$$
$$
D_{2,n} := A_{1,n} - D_{1,n} = \sum_{i=k+3}^\infty \sum_{j_1>0} \sum_{j_2>0} j_1j_2\frac{n^i}{i!}\, \E \bigl[ g_{r_n(t_1)}^{(i, j_1)} (\mathcal X_i, \mathcal X_i \cup \Pn) g_{r_n(t_2)}^{(i, j_2)} (\mathcal X_i, \mathcal X_i \cup \Pn)  \bigr],
$$
Based on this decomposition, we claim that
\begin{equation}  \label{e:D1n}
\rho_n^{-1} D_{1,n} \to \mu_{k,\bbr^d}(t_1,t_2),  \ \ \ n\to\infty, 
\end{equation}  
and $\rho_n^{-1} D_{2,n}$ and $\rho_n^{-1} A_{2,n}$ both converge to $0$ as $n\to\infty$. An important implication of these convergence results is that
$$
\rho_n^{-1} \text{Cov} \bigl( S_{k,n}(t_1), S_{k,n}(t_2) \bigr) \to \mu_{k,\bbr^d}(t_1,t_2), \ \ \ n\to\infty; 
$$
namely, the covariance of $\beta_{k,n}(t)$ asymptotically coincides with that of $S_{k,n}(t)$. 

By what should now be a familiar argument and the customary change of variable, we see that
\begin{align}
\rho_n^{-1} D_{1,n} &= \frac{\rho_n^{-1} n^{k+2}}{(k+2)!}  \E \bigl[ h_{r_n(t_1)} (\X{k+2}) h_{r_n(t_2)}(\X{k+2})\, \E [J_{k+2, r_n(t_2)} (\X{k+2}, \X{k+2} \cup \Pn) \bigl| \X{k+2} ] \bigr]  \label{e:rhon.D1n}  \\
&= \frac{\rho_n^{-1} n^{k+2}}{(k+2)!} \int_{\mathbb{R}^{d(k+2)}} h_{r_n(t_1)}(\bx)h_{r_n(t_2)}(\bx) \exp\bigl(-nI_{r_n(t_2)}(\bx)\bigr) \prod_{j=1}^{k+2} f(x_j) \dif{\bx} \notag\\
&= \frac{1}{(k+2)!} \int_{\mathbb{R}^{d(k+1)}} \int_{\bbr^d} h_{t_1}(0,\by) h_{t_2}(0,\by) \exp \bigl(  -nI_{r_n(t_2)}(x, x+s_n\by)\bigl) \notag \\
&\qquad \qquad \qquad \qquad \qquad \qquad \qquad  \times f(x)\prod_{j=1}^{k+1} f(x+s_ny_j) \dif{x}  \dif{\by}. \notag
\end{align}
By the continuity of $f$ it holds that $\prod_{j=1}^{k+1} f(x+s_ny_j) \to f(x)^{k+1}$ a.e.~as $n\to \infty$. 
Moreover, the exponential term converges to $1$ because we see that
$$
nI_{r_n(t_2)}(x, x+s_n \by) \leq ns_n^d \| f \|_\infty m \Bigl( \B \big(  \{ 0,\by \}; t_2 \big)  \Bigr) \to 0, \ \ n\to\infty.
$$
Thus \eqref{e:D1n} follows from the dominated convergence theorem. 

Next let us turn to the asymptotics of $\rho_n^{-1}D_{2,n}$. Proceeding as in \eqref{e:A1n.bound}, while applying \eqref{e:obvbound} and \eqref{e:spanning.tree}, we have that 
\begin{align*}
\rho_n^{-1} D_{2,n} &\leq \sum_{i=k+3}^\infty \sum_{j_1>0} \sum_{j_2>0}j_1 j_2\,  \frac{\rho_n^{-1} n^{i}}{i!}\, \E\bigg[ \one \bigl\{  \C (\mathcal{X}_i, r_n(t_1)) \text{ is connected} \bigr\} \prod_{\ell=1}^{2} b_{j_{\ell}, r_n(t_{\ell})}(\mathcal{X}_i) \bigg] \\
&\leq \sum_{i = k+3}^{\infty} \binom{i}{k+1}^2 \frac{\rho_n^{-1}n^{i}}{i!}\, \P\big(  \C (\mathcal X_{i}, r_n(t_1)) \text{ is connected} \big) \\
&\leq \frac{\big( t_1^d \| f \|_\infty \theta_d \big)^{k+1}}{\big((k+1)!\big)^2} \sum_{i=k+3}^\infty b_{i,n},  
\end{align*}
where 
$$
b_{i,n} := \frac{i! i^{i-2}}{\big( (i-k-1)! \big)^2}\, \big( nr_n(t_1)^d \| f \|_\infty \theta_d \big)^{i-(k+2)}. 
$$
Obviously $b_{i,n} \to 0$, $n\to\infty$ for all $i \geq k+3$. Since $ns_n^d \to 0$, it is easy to find a summable upper bound $c_i \geq b_{i,n}$ for sufficiently large $n$. Now the dominated convergence theorem for sums concludes $\rho_n^{-1} D_{2,n} \to 0$ as $n\to\infty$. 

For the evaluation of $n^{-1}|A_{2,n}|$, we apply \eqref{e:spanning.tree.bound2} to the right hand side at \eqref{e:A2n.bound}. Slightly changing the description of the resulting bound, we obtain
\begin{align*}
\rho_n^{-1} |A_{2,n}| &\leq 3 \cdot 2^d\, \frac{\bigl( t_2^d \| f \|_\infty \theta_d \bigr)^{k+1}}{\bigl(  (k+1)!\bigr)^2}\, \\
&\quad \times \sum_{i_1=k+2}^\infty \sum_{i_2=k+2}^\infty \frac{i_1^{i_1-1} i_2^{i_2-1}}{(i_1-k-1)! (i_2-k-1)!}\, \bigl(  nr_n(t_2)^d \| f \|_\infty \theta_d \bigr)^{i_1+i_2 - (k+2)}.  
\end{align*}
Since $ns_n^d \to 0$ as $n\to\infty$, it follows from the dominated convergence theorem for sums that $\rho_n^{-1} A_{2,n} \to 0$, $n\to\infty$, as desired. 
\end{proof}


\begin{proof}[Proof of Theorem \ref{T:sparse}]
We first establish the central limit theorem for $S_{k,n}(t)$ by proceeding in an almost identical fashion to Theorem~\ref{T:cclt}. As in that proof, we require that the left-most point of each subset $\Y \subset \Pn$ to lie in an (open) bounded set $A \subset \R^d$, with $m(\partial A) = 0$. Let $V_n, Q_{j,n}$ and $t$ be defined as in the proof of Theorem~\ref{T:cclt}. In this case however, we let $V_n$ be the vertex set of a dependency graph by letting $j \sim \pr j$ if and only if $d(Q_{j,n}, Q_{\pr j, n}) \leq 2(k+2)r_n(t)$. We modify $\xi_{j,n}$ to be defined as
\[
\xi_{j,n} := \sum_{i=1}^m a_i \sum_{\Y \subset \Pn} g_{r_n(t_i), A \cap Q_{j,n}}(\Y, \Pn)
\]
so that $\sum_{i=1}^m a_i S_{k,n,A}(t_i) = \sum_{j \in V_n} \xi_{j,n}$. 
Furthermore, $Y_{j,n}$ denotes the number of points of $\Pn$ in $\text{Tube} (Q_{j,n}, (k+2)r_n(t))$. Then, 
$$
|\xi_{j,n}| \leq \sum_{i=1}^m |a_i| \binom{Y_{j,n}}{k+2}. 
$$
It is easy to demonstrate that the Poisson parameter of $Y_{j,n}$ is bounded by $cns_n^d$ for some constant $c>0$---see \eqref{e:poi.param.bound}. Letting $C^*$ be a general positive constant as in the proof of Theorem \ref{T:cclt}, we get that for $\alpha \in \bbn$, 
$$
\E [| \xi_{j,n} |^\alpha] \leq C^* (ns_n^d)^{k+2}. 
$$
This in turn implies $\E \big[ |\xi_{j,n} - \E [\xi_{j,n}]|^p \big] \leq C^* (ns_n^d)^{k+2}$ for $p=3,4$. Let 
\[
\pr{\xi}_{j,n} := \frac{\xi_{j,n} - \E[\xi_{j,n}]}{\sqrt{\Var \big(\sum_{i=1}^m a_i S_{k,n,A} (t_i)\big)}}
\]
and $Z \sim \mathcal N(0,1)$. 
As in the critical regime case, Stein's normal approximation method gives 
\begin{align*}
\Bigl|\,  &\P\bigl( \sum_{j \in V_n} \xi_{j,n}' \leq x \bigr) - \P(Z \leq x) \, \Bigr|  \\
&\leq C^*\left( \sqrt{s_n^{-d}\rho_n^{-3/2} \E \bigl[ |\xi_{j,n}-\E[\xi_{j,n}]|^3 \bigr]} + \sqrt{s_n^{-d} \rho_n^{-2}\E \bigl[ |\xi_{j,n} -\E[ \xi_{j,n}]|^4 \bigr]}   \right),
\end{align*}
The right-hand side vanishes as $n\to\infty$, since for $p=3,4$, 
$$
s_n^{-d}\rho_n^{-p/2} \E \bigl[ |\xi_{j,n}-\E[\xi_{j,n}]|^p \bigr] \leq C^* \rho_n^{1-p/2} \to 0, \ \ \ n\to\infty.  
$$
Thus we have obtained 
\begin{equation}  \label{e:clt.Skn}
\rho_n^{-1/2} \bigl( S_{k,n}(t_i) - \E \bigl[ S_{k,n}(t_i) \bigr], \, i=1,\dots,m \bigr) \Rightarrow \mathcal N \bigl( 0, (\mu_{k,\bbr^d}(t_i,t_j))_{i,j=1}^m \bigr). 
\end{equation}
The limiting covariance matrix above coincides with the covariance functions of the process $\mathcal G_k$, i.e., 
$$
\E \bigl[ \mathcal G_k(t_i) \mathcal G_k(t_j) \bigr] = C_{f,k} \int_{\bbr^{d(k+1)}} h_{t_i} (0,\by) h_{t_j}(0,\by)   \dif \by = \mu_{k,\bbr^d}(t_i,t_j), \ \ i,j = 1,\dots,m. 
$$
Therefore \eqref{e:clt.Skn} is equivalent to
$$
\rho_n^{-1/2} \bigl( S_{k,n}(t_i) - \E \bigl[ S_{k,n}(t_i) \bigr], \, i=1,\dots,m \bigr) \Rightarrow \bigl( \mathcal G_k(t_i), \, i=1,\dots,m \bigr). 
$$
Now we can finish the entire proof, provided that for every $t>0$, 
$$
\rho_n^{-1/2} \bigl( \beta_{k,n}(t) - \E \bigl[ \beta_{k,n}(t) \bigr] \bigr)- \rho_n^{-1/2} \bigl( S_{k,n}(t) - \E \bigl[ S_{k,n}(t) \bigr] \bigr) \stackrel{p}{\to} 0, \ \ n\to\infty. 
$$
This can be proved immediately by the Cauchy-Schwarz inequality. That is, for every $\epsilon>0$, 
$$
\P \Bigl( \rho_n^{-1/2} \bigl| R_{k,n}(t) - \E [R_{k,n}(t)]  \bigr| > \epsilon \Bigr) \leq \frac{1}{\epsilon^2 \rho_n}\, \Var \bigl( R_{k,n}(t) \bigr) \to 0,
$$
where the convergence is a direct consequence of $\rho_n^{-1}D_{2,n} \to 0$ and $\rho_n^{-1}A_{2,n} \to 0$, which were verified in the proof of Proposition \ref{P:covsparse}. 
\end{proof}

\subsection{Poisson regime}  \label{s:proof.Poisson}
\begin{proof}[Proof of Theorem \ref{T:Poisson}]
We begin by defining 
$$
H_{k,n}(t) := \sum_{\Y \subset \Pn} h_{r_n(t)}(\Y),
$$
and show that 
\begin{equation}  \label{e:conv.Hkn}
\bigl( H_{k,n}(t_i), \, i=1,\dots,m \bigr) \Rightarrow \bigl( \mathcal V_k(t_i), \, i=1,\dots,m \bigr). 
\end{equation}
Subsequently we shall verify that for every $t>0$, 
\begin{align}
H_{k,n}(t) - S_{k,n}(t) &\stackrel{p}{\to} 0, \label{e:HknSkn}\\
\beta_{k,n}(t) - S_{k,n}(t) &\stackrel{p}{\to} 0.  \label{e:betaknSkn}
\end{align}
Then the proof of \eqref{e:Poisson.beta} will be complete. 
\vspace{5pt}

\noindent \emph{\underline{Part 1}}: For the proof of \eqref{e:conv.Hkn}, it is sufficient to show that for any $a_1, a_2, \dots, a_m > 0$, $m \geq 1$, 
\begin{equation*}
\sum_{i=1}^{m} a_i H_{k,n}(t_i) \Rightarrow \sum_{i=1}^{m} a_i \mathcal{V}_k(t_i).
\end{equation*}
We may use positive constants because of the fact that the Laplace transform characterizes a random vector with values in $\R_+^m$. We proceed by using Theorem 3.1 from \cite{dfsch16}. First let $(\Omega, \mathcal F, \P)$ denote a generic probability space on which all objects are defined. 
Let $\mathbf N (\bbr_+)$ be the set of finite counting measures on $\bbr_+$. We equip $\mathbf N (\bbr_+)$ with the vague topology; see, e.g., \cite{resnick:1987} for more information on the vague topology. . 
Let us define a point process $\xi_n: \Omega \to \mathbf{N}(\R_+)$ by
\[
\xi_n(\cdot) := \sum_{\Y \subset \Pn}\one \bigl\{ \sum_{i=1}^m a_i h_{r_n(t_i)} (\Y) >0  \bigr\}\,  \delta_{\sum_{i=1}^m a_i h_{r_n(t_i)}(\Y)}(\cdot), 
\]
where $\delta$ is a Dirac measure. 

Additionally let $\zeta: \Omega \to \mathbf N(\bbr_+)$ denote a Poisson random measure with mean measure $C_{f,k} \tau_k$ where 
$$
\tau_k(A) := m_k \Bigl\{ \by \in \bbr^{d(k+1)}: \sum_{i=1}^m a_i h_{t_i} (0,\by) \in A \setminus \{ 0 \}  \Bigr\}, \ \ A \subset \bbr_+. 
$$
The rest of Part 1 is devoted to  showing that
\begin{equation}  \label{e:PP.conv}
\xi_n \Rightarrow \zeta \ \ \text{in } \mathbf N(\bbr_+). 
\end{equation}
According to Theorem 3.1 in \cite{dfsch16}, the following two conditions suffice for \eqref{e:PP.conv}. Let $\mathbf L_n(\cdot) := \E [\xi_n(\cdot)]$ and $\mathbf M (\cdot) := \E[\zeta (\cdot)] = C_{f,k}\tau_k(\cdot)$. The first requirement for \eqref{e:PP.conv} is the convergence in terms of the \textit{total variation distance}:
\begin{equation}  \label{e:first.cond}
d_{\text{TV}} (\mathbf L_n, \mathbf M) := \sup_{A \in \B(\bbr_+)} \bigl| \mathbf L_n(A)  - \mathbf M(A) \bigr| \to 0, \ \ \ n \to \infty,
\end{equation} 
where $\B(\bbr_+)$ is the Borel $\sigma$-field over $\bbr_+$. 
In addition, the second requirement for \eqref{e:PP.conv} is
\begin{align}
v_n := \max_{1 \leq \ell \leq k+1} &\int_{\bbr^{d\ell}} \biggl( \int_{\bbr^{d(k+2-\ell)}} \one \Bigl\{ \sum_{i=1}^m a_i h_{r_n(t_i)} (x_1,\dots,x_{k+2}) > 0 \Bigr\} \label{e:second.cond}\\
&\qquad \qquad  \lambda^{k+2-\ell} \bigl(  \dif \, (x_{\ell+1}, \dots, x_{k+2})\bigr) \biggr)^2 \lambda^\ell \bigl(  \dif \, (x_1,\dots,x_\ell)\bigr) \to 0 \notag
\end{align}
as $n\to\infty$, where $\lambda^m = \lambda \otimes \cdots \otimes \lambda$ is a product measure on $\bbr^m$ with $\lambda(\cdot) = n\int_\cdot f(z) \dif z$. 

Let us now return to \eqref{e:first.cond} and present its proof here. 
Let $t := \max\{ t_1,\dots,t_m \}  =t_m$. Then,  for any $A \in \B(\bbr_+)$ we have from Palm theory, the change of variables $x_1=x$, $x_i=x+s_n y_{i-1}$ for $i=2,\dots,k+2$, and $\rho_n =  1$ that 
\begin{align*}
\mathbf L_n(A) &= \frac{n^{k+2}}{(k+2)!}\, \int_{\bbr^{d(k+2)}} \one \bigl\{ \, \sum_{i=1}^m a_i h_{r_n(t_i)}(\bx) \in A \setminus \{ 0 \} \bigr\} \prod_{j=1}^{k+2}f(x_j) \dif \bx \\
&= \frac{1}{(k+2)!}\, \int_{\bbr^{d(k+2)}}  \one \bigl\{ \, \sum_{i=1}^m a_i h_{t_i}(0, \by) \in A \setminus \{ 0 \} \bigr\} f(x) \prod_{j=1}^{k+1} f(x+s_n y_j) \dif x \dif \by. 
\end{align*}
Therefore, 
\begin{align*}
&\bigl| \mathbf L_n(A)  - \mathbf M(A)  \bigr| \\
&\leq \frac{1}{(k+2)!}\, \int_{\bbr^{d(k+2)}}  \one \bigl\{ \, \sum_{i=1}^m a_i h_{t_i}(0, \by) \in A \setminus \{ 0 \} \bigr\} f(x)\, \Bigl| \prod_{j=1}^{k+1} f(x+s_n y_j)  -f(x)^{k+1}  \Bigr|  \dif x \dif \by. 
\end{align*} 
If the indicator function above is equal to $1$, then $h_{t_i}(0,\by) = 1$ for at least one $i$, which means that the distance of each component in $\by$ from the origin must be less than $t$. Otherwise one cannot form a required empty $(k+1)$-simplex. 
Hence we have 
\begin{align*}
\bigl| \mathbf L_n(A)  - \mathbf M(A)  \bigr|  \leq \frac{1}{(k+2)!}\, \int_{\bbr^{d(k+2)}} \prod_{i=1}^{k+2} \one \{ |y_i| \leq t \} f(x)\, \Bigl| \prod_{j=1}^{k+1} f(x+s_n y_j)  -f(x)^{k+1}  \Bigr|  \dif x \dif \by. 
\end{align*}
We have by continuity of $f$ that $\bigl| \prod_{j=1}^{k+1} f(x+s_n y_j)  -f(x)^{k+1}  \bigr|$ converges to $0$ a.e. ~as $n\to\infty$ and is bounded by $2\|f \|_{\infty}^{k+1} < \infty$. So the dominated convergence theorem applies to get $\bigl| \mathbf L_n(A) - \mathbf M(A) \bigr| \to 0$ as $n\to\infty$. Since this convergence holds uniformly for all $A\in \B(\bbr_+)$, we have now established \eqref{e:first.cond}. 

Next we turn to proving \eqref{e:second.cond}. First we can immediately see that 
\begin{align*}
v_n = \max_{1 \leq \ell \leq k+1} &n^{2k+4-\ell} \int_{\bbr^{d(2k+4-\ell)}} \one \Bigl\{ \sum_{i=1}^m a_i h_{r_n(t_i)} (x_1,\dots,x_{k+2}) > 0 \Bigr\} \\
&\times \one \Bigl\{ \sum_{i=1}^m a_i h_{r_n(t_i)} (x_1,\dots,x_\ell, x_{k+3}, \dots, x_{2k+4-\ell}) > 0 \Bigr\} \prod_{j=1}^{2k+4-\ell} f(x_j) \dif \bx. 
\end{align*}
Making a change of variables with $x_1 = x$ and $x_i = x + s_ny_{i-1}$ for $i=2,\dots,2k+4-\ell$, while using $f(x+s_ny_{i-1}) \leq \|  f\|_\infty$, we get that 
\begin{align*}
v_n \leq \| f \|_\infty^{2k+3-\ell} \max_{1 \leq \ell \leq k+1} &n^{2k+4-\ell}s_n^{d(2k+3-\ell)}  \int_{\bbr^{d(2k+3-\ell)}}  \one \Bigl\{ \sum_{i=1}^m a_i h_{t_i} (0,y_1,\dots,y_{k+1}) > 0 \Bigr\} \\
&\qquad \times \one \Bigl\{ \sum_{i=1}^m a_i h_{t_i} (0,y_1, \dots,y_{\ell-1}, y_{k+2}, \dots, y_{2k+3-\ell}) > 0 \Bigr\}\dif \by. 
\end{align*}
Obviously the above integral is finite, and 
$$
\max_{1 \leq \ell \leq k+1} n^{2k+4-\ell}s_n^{d(2k+3-\ell)} = \max_{1 \leq \ell \leq k+1} (ns_n^d)^{k+2-\ell} \to 0, \ \ \ n\to\infty,
$$
by the assumption $\rho_n=1$. So $v_n \to 0$ follows and \eqref{e:second.cond} is obtained. 
\bigskip

\noindent \emph{\underline{Part 2}}: Define the map $\widehat T: \mathbf N (\bbr_+) \to \bbr_+$ by $\widehat T (\sum_n \delta_{x_n}) = \sum_n x_n$. This map is continuous because it is defined on the space of \textit{finite} counting measures. Applying the continuous mapping theorem to \eqref{e:PP.conv} gives $\widehat T(\xi_n) \Rightarrow \widehat T (\zeta)$. Equivalently, we have
$$
\sum_{i=1}^m a_i H_{k,n}(t_i)\Rightarrow \sum_{i=1}^m a_i \mathcal V_k (t_i). 
$$
To see such equivalence, note that $\widehat T(\xi_n) = \sum_{i=1}^m a_i H_{k,n}(t_i)$, so it now suffices to  show that $\widehat T(\zeta)$ is equal in distribution to $\sum_{i=1}^m a_i \mathcal V_k(t_i)$. 
To this aim let us represent $\zeta$ as 
$$
\zeta \stackrel{d}{=} \sum_{i=1}^{M_n} \delta_{Y_i}, 
$$
where $Y_1,Y_2,\dots$ are i.i.d with common distribution $\tau_k(\cdot)/\tau_k (\bbr_+)$ and $M_n$ is Poisson distributed with parameter $C_{f,k}\tau_k (\bbr_+)$. Further, $(Y_i)$ and $M_n$ are independent. On one hand, it follows from the Laplace functional of a Poisson random measure (see Theorem 5.1 in \cite{resnick:2007}) that for every $\lambda >0$,  
\begin{align*}
\E \Bigl[ \exp \bigl( -\lambda \sum_{i=1}^m a_i \mathcal V_k(t_i) \bigr)  \Bigr] &= \E \biggl[  \exp\Big( -\int_{\bbr^{d(k+1)}} \lambda \sum_{i=1}^m a_i h_{t_i} (0,\by) M_k(\dif \by) \Big) \biggr] \notag \\
&= \exp \biggl(  - C_{f,k} \int_{\bbr^{d(k+1)}} \bigl( 1-e^{-\lambda \sum_{i=1}^m a_i h_{t_i} (0,\by)} \bigr) \dif \by \biggr)
\end{align*}
On the other hand it is straightforward to compute that 
\begin{align*}
\E \bigl[ \exp\big(-\lambda \widehat T (\zeta) \big)  \bigr] &= \E \Bigl[  \exp\Big( -\lambda \sum_{i=1}^{M_n} Y_i \Big) \Bigr]
= \exp \Bigl( -C_{f,k}\tau_k(\bbr_+) (1-\E[e^{-\lambda Y_1}]) \Bigr) \\
&= \exp \Bigl(  - C_{f,k} \int_{\bbr^{d(k+1)}} \bigl( 1-e^{-\lambda \sum_{i=1}^m a_i h_{t_i} (0,\by)} \bigr) \dif \by \Bigr), 
\end{align*}
implying $\widehat T(\zeta) \stackrel{d}{=} \sum_{i=1}^m a_i \mathcal V_k(t_i)$ as required.  
\bigskip

\noindent \emph{\underline{Part 3}}: It remains to show \eqref{e:HknSkn} and \eqref{e:betaknSkn}. As for \eqref{e:HknSkn}, we know from \eqref{e:D1n} with $\rho_n = 1$ and $t_1 = t_2$, that 
$$
\E [S_{k,n}(t)] \to \mu_{k,\bbr^d}(t,t),  \ \ \ n\to\infty. 
$$
Since the exponential term in \eqref{e:rhon.D1n} converges to $1$ without affecting the value of the limit, it must be that the $\E [H_{k,n}(t)]$ and $\E[S_{k,n}(t)]$ have the same limit. That is,  
$$
\E [H_{k,n}(t)] \to \mu_{k,\bbr^d}(t,t), \ \ \ n\to\infty,
$$
and thus, the Markov inequality gives \eqref{e:HknSkn}. 

Finally we turn our attention to \eqref{e:betaknSkn}. By Markov's inequality, it suffices to show that $\E [R_{k,n}(t)] \to 0$ as $n\to\infty$. Mimicking the derivation of \eqref{e:bound.A1n} with $\rho_n = 1$, we get that 
$$
\E[R_{k,n}(t)] \leq \sum_{i=k+3}^\infty \binom{i}{k+1} \frac{n^i}{i!}\, \P \bigl( \C (\mathcal X_i, r_n(t)) \text{ is connected} \bigr). 
$$
Recalling the bound in \eqref{e:spanning.tree}, we  have 
\begin{align*}
\E [R_{k,n}(t)] &\leq \frac{\bigl( t^d \| f \|_\infty \theta_d \bigr)^{k+1}}{(k+1)!}\, \sum_{i=k+3}^\infty \frac{i^{i-2}}{(i-k-1)!}\, \bigl( nr_n(t)^d \| f \|_\infty \theta_d \bigr)^{i-(k+2)} \to 0
\end{align*}
as $n\to\infty$. 
\end{proof} 

\bibliography{ProcesslevelBetti}

\end{document}